\newtheorem{theorem}{Theorem}
\newtheorem{lemma}{Lemma}
\newtheorem{example}{Example}
\newtheorem{definition}{Definition}
\newtheorem{assumption}{Assumption}
\newtheorem{corollary}{Corollary}
\newtheorem{remark}{Remark}
\newcommand{\R}{\mathbb{R}}
\newcommand{\TTS}{\text{TTS}}
\newcommand{\TFT}{\text{TFT}}
\newcommand{\TWT}{\text{TWT}}
\title{Sufficient Optimality Conditions for Distributed, Non-Predictive Ramp Metering in the Monotonic Cell Transmission Model\thanks{Research was supported by the European Union 7th Framework Program ``Scalable Proactive Event-Driven Decision-making (SPEEDD)" (FP7-ICT 619435). }
}
\author{Marius Schmitt, Chithrupa Ramesh, and John Lygeros
\thanks{$^{1}$Automatic Control Laboratory, ETH Zurich, Switzerland,     {\tt\small <schmittm,rameshc,lygeros>@control.ee.ethz.ch}}%
}
\begin{document}

\maketitle

\begin{abstract}
We consider the ramp metering problem for a freeway stretch modeled by the Cell Transmission Model. Assuming perfect model knowledge and perfect traffic demand prediction, the ramp metering problem can be cast as a finite horizon optimal control problem with the objective of minimizing the Total Time Spent, i.e., the sum of the travel times of all drivers. For this reason, the application of Model Predictive Control (MPC) to the ramp metering problem has been proposed. However, practical tests on freeways show that MPC may not outperform simple, distributed feedback policies. Until now, a theoretical justification for this empirical observation was lacking.  This work compares the performance of distributed, non-predictive policies to the optimal solution in an idealised setting, specifically, for monotonic traffic dynamics and assuming perfect model knowledge. To do so, we suggest a distributed, non-predictive policy and derive sufficient optimality conditions for the minimization of the Total Time Spent via monotonicity arguments. In a case study based on real-world traffic data, we demonstrate that these optimality conditions are only rarely violated. Moreover, we observe that the suboptimality resulting from such infrequent violations appears to be negligible. We complement this analysis with simulations in non-ideal settings, in particular allowing for model mismatch, and argue that Alinea, a successful, distributed ramp metering policy, comes close to the ideal controller both in terms of control behavior and in performance.
\end{abstract}




\section{Introduction} \label{sec:introduction} 

Ramp metering refers to the active control of the inflow of cars on a freeway via the onramps, by means of installing and controlling a traffic light at every onramp. In this work, we consider the freeway ramp metering problem over a finite horizon, e.g.\ one day or one rush-hour period. Freeway ramp metering has been established as an effective and practically useful tool to improve traffic flows on congestion-prone freeways \cite{papageorgiou2003review,papageorgiou2000freeway}. We study the problem by adopting the Cell Transmission Model (CTM) for freeways, as introduced in the seminal work by Daganzo \cite{daganzo1994cell,daganzo1995cell}, which can be interpreted as a first-order Godunov approximation of the continuous Lighthill-Whitham-Richards-model (LWR) \cite{lighthill1955kinematic,richards1956shock}. Modifications and generalizations of this model have since been introduced, and we consider a slight generalization of the original, piece-wise affine CTM to a more general, concave fundamental diagram \cite{como2016convexity,coogan2016stability,lovisari2014stability2}. Furthermore, we employ a simplified onramp model originally introduced as the ``asymmetric" CTM \cite{gomes2006optimal,gomes2008behavior}, which simplifies the model of onramp-mainline merges by distinguishing mainline- and onramp-traffic demand.

The popularity of the CTM for model-based control stems from the simplicity of the model equations, allowing for computationally efficient solution methods for finite horizon optimal control problems. In particular, relaxing the piecewise-affine fundamental diagram allows one to pose such optimal control problems as linear programs \cite{ziliaskopoulos2000linear}. The solution of the relaxed problem can be made feasible for the original, non-relaxed problem by employing mainline demand control \cite{muralidharan2012optimal}. In addition, conditions on the structure of the road network and its dynamics have been derived, which ensure that ramp metering alone is sufficient to make solutions feasible \cite{gomes2006optimal,como2016convexity}.

A conceptually different approach uses distributed feedback. In such ramp metering policies, local controllers only receive measurements from sensors in close vicinity to any particular onramp and only exchange limited amounts of information, if at all \cite{papageorgiou1991alinea,stephanedes1994implementation,zhang1997freeway}. Those control policies aim at maximizing bottleneck flows locally, but have been shown to come close to the performance of optimal ramp metering policies in real-world evaluations \cite{smaragdis2004flow,wang2013local,papamichail2010heuristic}. While it is apparent that such local feedback controllers are far easier to implement than model-based optimal control policies, it is not obvious why and when the performance of distributed, non-predictive ramp metering policies comes close to the centralized, optimal control solution. A special case for which the optimal control policy can be explicitly constructed is analyzed in \cite{zhang2004optimal}. It is stated that the structure of the explicit solution ``explains why some local metering algorithms [...] are successful -- they are really close to the most-efficient logic". However, no proof of optimality is provided. An interesting result exists for the problem of controlling a network of signalized intersections, modeled as a queue network. In particular, throughput optimality of a distributed controller, the so-called max-pressure controller, has been shown via network calculus arguments \cite{varaiya2013max}. However, the employed network model does not include congestion spill-back effects.

In addition, distributed policies are popular in routing problems, where the route choice through a network is (partially) controlled. It was established that monotonicity of certain routing policies can be leveraged in the analysis of its robustness with respect to non-anticipated reductions of the capacity of individual links \cite{como2013robust1,como2013robust2}. Subsequently, a generalized class of monotonic, distributed routing policies which implicitly back-propagate congestion effects was introduced and it was proven that such policies stabilize an equilibrium that maximizes throughput \cite{como2015throughput}, for dynamical network models reminiscent of the CTM. Even though we will not assume that split ratios can be actuated in this work, it is interesting to note that monotonicity of parts of the system dynamics will also be essential to our analysis.

This work addresses the question of how distributed, non-predictive ramp metering policies compare to optimization-based, centralized and predictive control policies, in terms of minimization of the Total Time Spent (TTS) for a freeway stretch modeled by the monotonic CTM. If perfect model knowledge and perfect traffic demand prediction is assumed, the ramp metering problem can be cast as a finite horizon optimal control problem, which can be reformulated as a Linear Program in case of a piecewise-affine fundamental diagram \cite{gomes2006optimal,como2016convexity}. However, both perfect traffic demand prediction and perfect model knowledge are unrealistic \cite{burger2013considerations}. We retain the assumption of perfect model knowledge for the purpose of the theoretical analysis but drop the assumption of prediction of external demands and introduce a distributed, non-anticipative feedback controller. This controller can be motivated as a one-step-ahead maximization of local traffic flows and is hence called the best-effort controller. We proceed to derive conditions under which such a controller performs optimally, that is, minimizes TTS. To demonstrate the applicability of our results, we perform a simulation case study based on a real-world freeway described in \cite{de2015grenoble}, using the monotonic CTM with freeway parameters and demand profiles estimated from real measurements. The simulations confirm the theoretical results as can be seen on days when the optimality conditions are satisfied at all time steps. However, our results do not provide any a priori bound on the suboptimality of the solution even for small violations of the conditions. 
Nevertheless, even on days during which violations of the optimality conditions are observed, a-posteriori suboptimality bounds can be computed by employing results from the theoretical analysis. In the evaluation, we find that violations of the optimality conditions affect only a small number of time steps for any given day. We also observe that such infrequent violations tend to lead to negligible optimality gaps, although this can only be verified a-posteriori.

The main contribution of this work lies in the derivation of sufficient optimality conditions for minimal TTS ramp metering in the monotonic CTM, which provide a theoretical explanation of why and when the performance of distributed, non-predictive ramp metering and optimal control policies coincides. The assumption of perfect model knowledge employed for the sake of the theoretical analysis is not practical, of course, but we proceed to show that the best-effort controller can in turn be interpreted as an idealized version of the successful Alinea controller \cite{papageorgiou1991alinea}, which replaces the need for perfect model knowledge by virtue of feedback.

Note that the analysis in this work is applicable to the monotonic CTM as studied in e.g.\ \cite{gomes2006optimal,como2016convexity}. Empirical evidence suggests the existence of a non-monotonic capacity drop of freeway traffic flow in congestion and (first-order) models incorporating this effect have been proposed \cite{kontorinaki2016capacity,yuan2017kinematic}. However, so far no general, efficient solution method (e.g. based on a convex reformulation) of optimal control problems for the non-monotonic CTM is known and this work does not directly generalize to a non-monotonic setting. Rather, it should be viewed as an intermediate step towards solving the more general problem.

The paper is organized as follows:  Section \ref{sec:problem_formulation} defines the CTM and the main problem of minimizing TTS. In Section \ref{sec:local_feedback}, we introduce the non-predictive, distributed best-effort controller. We also show that this controller is not always optimal, by stating two counterexamples. In Section \ref{sec:optimality_conditions}, we derive sufficient optimality conditions that can be used to check optimality a posteriori. Section \ref{sec:application} introduces a case study based on real-world freeway parameters, and we demonstrate that the optimality gap of best-effort control is negligibly small. Furthermore, we compare best-effort control to Alinea. Section \ref{sec:conclusions} concludes and discusses implications for the development of real-world, predictive, coordinated ramp-metering policies.

We adopt the following notation: The k-th component of a vector $x$ is denoted $x_k$. The index $k$ will always be used to denote the index of a cell in the CTM and hence $k \in \{1,\dots,n\}$ with $n$ the number of cells in the model, unless a different range is explicitly specified. If a variable $x$ evolves over time, the value at time $t$ is denoted by $x(t)$. We consider discrete time models over a horizon $T$, i.e.,\ $t \in \{0,\dots, T\}$. The operators $\leq, \geq, <, >$ used with vectors denote elementwise inequalities. Also, we write $\left[ x \right]^{u}_{l} := \min \left\{ u, \max \left\{ x, l \right\} \right\}$ for a saturation at an upper bound $u$ and lower bound $l$.

\section{Problem formulation} \label{sec:problem_formulation} 

\begin{figure}[t] 
	\centering
		\includegraphics[width=0.9\columnwidth]{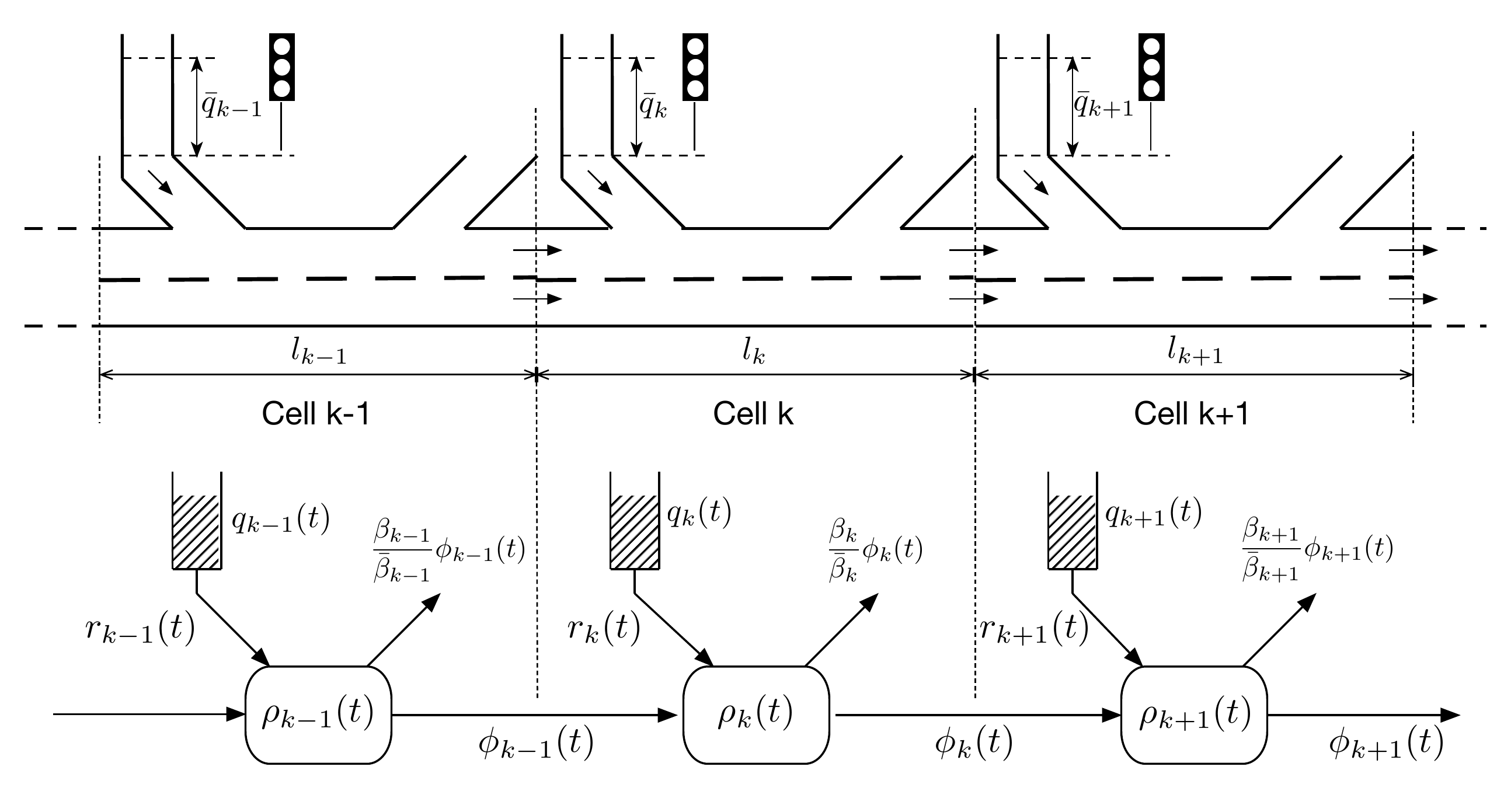}
	\caption{Sketch of the CTM of a freeway segment controlled by ramp metering}
	\label{fig:ctm_sketch}
\end{figure}

We consider the (asymmetric) CTM as introduced in \cite{gomes2006optimal,gomes2008behavior}, which employs a simplified onramp model in comparison to the original model \cite{daganzo1994cell,daganzo1995cell}. In the CTM, the freeway is partitioned into \textit{cells} of length $l_k$, as visualized in Figure \ref{fig:ctm_sketch}. The state of the mainline is described by the traffic \textit{density} $\rho_k(t)$, i.e.,\ the number of cars per length in each cell. The metered onramps are modeled as integrators, and their state is given by the queue length $q_k(t)$, i.e.,\ the number of cars waiting in the queue. The evolution of the system is described by flows of cars during discrete time intervals of duration $\Delta t$. The mainline \emph{flow} between cells $k$ and $k+1$ in time interval $t$ is denoted by $\phi_k(t)$. We make the assumption of constant split ratios, which means that the outflows from the offramps are modeled as percentages $\beta_k$ of the mainline flows, where $\beta_k$ are designated as the \emph{split ratios} of traffic at the offramp in cell $k$. For notational convenience, we also use $\bar\beta_k := 1 - \beta_k$. For metered onramps, the inflows to the freeway are given by the ramp  \textit{metering rates} $r_k(t)$.  The \emph{external traffic demands} $w_k(t)$, i.e.,\ the number of cars per unit time seeking to enter the freeway from either a ramp $k \in \{1,\dots,n\}$ or from the upstream mainline, act as external disturbances on the system. The evolution of the state of the CTM is described by the conservation laws
\begin{subequations}
\begin{align}
\rho_k(t+1) &= \rho_k(t) + \frac{\Delta t}{l_k} \left( \phi_{k-1}(t) + r_k(t) - \frac{1}{\bar\beta_k} \phi_k(t) \right) , \label{eq:density_dynamics} \\
q_k(t+1) &= q_k(t) + \Delta t \cdot \left( w_k(t) - r_k(t) \right) . \label{eq:onramp}
\end{align}
\end{subequations}
This onramp model relies on the implicit assumption that congestion does not spill back onto the onramps. In particular, the model assumes that all cars assigned to enter the freeway in a sampling interval can indeed do so. While this assumption might not be satisfied for an uncontrolled freeway, it was shown to be satisfied for a freeway controlled by ramp metering in a case study \cite{gomes2006optimal}. 
Naturally, the ramp metering rates are nonnegative and subject to a constant upper bound
\begin{align}
0 \leq r_k(t) \leq \bar r_k ,
\label{eq:rampbounds1}
\end{align}
 which characterizes the maximal onramp flow. We will assume that the external demand is bounded $0 \leq w_k(t) \leq \bar r_k$ for all $k$ and for all times $t$. In addition, the limited space on the onramp $0 \leq q_k(t) \leq \bar q_k(t) $ potentially limits the allowed ramp metering rates further and using \eqref{eq:onramp}, we obtain bounds
 \begin{align}
\frac{1}{\Delta t} \left(q_k(t) - \bar q_k \right) + w_k(t) \leq r_k(t) \leq \frac{1}{\Delta t} q_k(t) + w_k(t)
\label{eq:rampbounds2}
\end{align}
 in terms of the metering rates. The upper bound simply ensures that only cars seeking admission in time interval $t$ can be admitted to enter the freeway, while the lower bound mandates that the ramp is operated such that the queue length on the onramp never exceeds the length of the ramp, to avoid queue spill back\footnote{The effects of congestion spill back from the onramps to the arterials is usually considered to have much worse effects than a congestion on the freeway mainline. Thus, avoiding such a spill back takes priority over avoiding a congestion on the mainline.}. We do not impose similar bounds on the outflows via the offramps: Effects like a spill back of congestion from the adjacent aterials are beyond the scope of this model and a constant, upper bound on the outflow from an offramp is redundant under the assumption of constant split ratios.

The CTM is a first order model and therefore, the mainline traffic flows $\phi(t)$ are not states but functions of the densities. The mainline flow $\phi_k(t)$ depends on the traffic \textit{demand} $d_k(\rho_k(t))$ proportional to the number of cars that seek to travel downstream from cell $k$ at time $t$, and the \textit{supply} $s_{k+1}(\rho_{k+1}(t))$ of free space available downstream in cell $k+1$ at time $t$. Demand and supply functions are often identified as the \textit{fundamental diagram} of a cell, as depicted in Figure \ref{fig:fd}. In the original work of  \cite{daganzo1994cell,daganzo1995cell}, a piecewise-affine (PWA) fundamental diagram was assumed, as it is obtained from the Godunov discretization of the LWR-model. In practice, one might want to consider more general shapes of the fundamental diagram and hence of the demand- and supply functions, in order to better approximate real world data (see e.g.\ \cite{karafyllis2015global,como2016convexity,coogan2016stability,lovisari2014stability2} for recent examples). In the remainder of this work, we will assume that:

\begin{assumption} 
\label{assumption:bounded_slope}
For every cell $k$, define a maximal density $\bar\rho_k$, called the traffic jam density, and the critical density $\rho_k^c$,  $0< \rho_k^c < \bar\rho_k$. The demand $d_k(x)$, $d_k: [0,\bar \rho] \rightarrow \mathbb{R}_+$ is a Lipschitz-continuous, nondecreasing function with $d_k(0) = 0$ and $d_k(\rho_k^c) = d_k(\bar\rho_k)$. Conversely, the supply $s_k(x)$, $s_k: [0,\bar \rho] \rightarrow \mathbb{R}_+$ is a Lipschitz-continuous, nonincreasing function with $s_k(\bar \rho_k) = 0$ and $s_k(\rho_k^c) = s_k(0)$. Furthermore, the sampling time $\Delta t$ is chosen such that it satisfies the bounds 
\begin{subequations}
\begin{align}
\Delta t \cdot  c^d_k \leq l_k \bar\beta_k  \quad \forall k ,  \\
\Delta t \cdot c^s_k \leq l_k \quad \forall k \label{eq:supply} , 
\end{align}
\end{subequations}
 with respect to the Lipschitz constants of the demand $c^d_k$ and of the supply $c^s_k$.
\end{assumption}
The flow
\begin{subequations}
\label{eq:flow_dynamics}
\begin{align}
\phi_0(t) &= w_0(t), \label{eq:beginflow} \\
\phi_k(t) &= \min\{d_k(\rho_k(t)), s_{k+1}(\rho_{k+1}(t))\} \quad \forall k \in \{1,\dots,n-1\} ,  \label{eq:mainflow} \\ 
\phi_{n}(t) &= d_n(\rho_n(t)) , \label{eq:endflow}
\end{align}
\end{subequations}
 is now given as the minimum of upstream demand and downstream supply. Note that the assumption of monotonicity of the demand functions implies that $ d_k(\rho_k) = d_k(\rho_k^c)$ for all $\rho_k: \rho_k^c \leq \rho_k \leq \bar\rho_k$, i.e.,\ the traffic demand is constant for densities larger than the critical density. Similarly, the assumption on monotonicity of the supply function implies that the supply is constant for all densities smaller than the critical density, i.e., $ s_k(\rho_k) = s_k(\rho_k^c)$ for all $\rho_k : 0 \leq \rho_k \leq \rho_k^c$. It is sometimes helpful to introduce the \emph{capacity} $F_k := \min\{d_k(\rho_k^c), s_{k+1}(\rho_{k+1}^c)\}$ as an additional parameter in the flow equations. The flow equation \eqref{eq:mainflow} can then equivalently be written as
\begin{align*}
\phi_k(t) = \min\{d_k(\rho_k(t)), F_k, s_{k+1}(\rho_{k+1}(t))\} .
\end{align*}
We will use $F_k$ in the flow equations only when it simplifies the presentation of a proof or of an example. The flow equations are slightly modified for the first and the last cell. For the last cell, we assume that the outflow from the freeway is unobstructed, i.e.,\ no congestion exists downstream of the (part of the) freeway that is modeled. Similarly, the inflow in the first cell is given by the external traffic demand $w_0(t)$, which is \emph{not} restricted by the supply in the first cell. The reason for this modification is that the flow into the first cell arises from external traffic demand. The equations therefore ensure that all external demand eventually will be served. By contrast, limiting the inflow according to the supply of free space, as it is done for any internal flow, would amount to discarding the surplus external demand. It is worth emphasizing that local traffic flows \eqref{eq:flow_dynamics} are maximized at the critical density. In particular, the flow $\phi_k$ into some cell $k+1$ with density equal to or smaller than the critical density $\rho_{k+1}(t) \leq \rho_{k+1}^c$ is not constrained by the supply of free space and hence $\phi_{k}(t) = \min\{d_{k}(\rho_{k}(t)), F_k\}$. Similarly, the flow out of a cell $k$ with density equal to or greater than the critical density $\rho_k(t) \geq \rho_k^c$ is not constrained by the traffic demand and hence $\phi_k(t) = \min\{F_k, s_{k+1}(\rho_{k+1}(t))\}$. A CTM freeway model that satisfies Assumption \ref{assumption:bounded_slope} will be called a \emph{monotonic CTM}. The classical, piecewise affine fundamental diagram satisfies this assumption: Here, we have that $d_k(\rho_k) = \min \left\{ \bar\beta_k v_k \rho_k, \frac{w_k}{\bar\beta_k v_k + w_k} \bar\rho_k \right\}$ and $s_k(\rho_k) = \min \left\{ \frac{w_k}{\bar\beta_k v_k + w_k} \bar\rho_k , (\bar\rho_k - \rho_k) w_k \right\}$ with $v_k$ the free-flow speed and $w_k$ the congestion wave speed. Monotonicity assumptions are trivially satisfied for affine demand and supply functions and the condition on $\Delta t$ can be recognized as the stability condition $v_k \cdot \Delta t \leq l_k ,~ \forall k$ that arises if the CTM is derived as a discretization of the wave PDE using the Godunov scheme. Note that in practice, the congestion-wave speed $w_k$ is significantly lower than the free-flow speed, thus the upper bound in inequality \ref{eq:supply} in Assumption \ref{assumption:bounded_slope} is not restrictive. Different shapes of the fundamental diagram encountered in practice are depicted in Figure \ref{fig:fd} for illustration.

\begin{remark}
Assumption \ref{assumption:bounded_slope} is similar to the assumptions on demand and supply functions made in \cite{lovisari2014stability2,coogan2015compartmental,coogan2016stability}, though the latter assumes that demand and supply functions are strictly increasing resp.\ decreasing. In particular, concavity of demand and supply functions is \emph{not} required, and indeed, the two latter references depict an example of a non-concave but monotonic supply function. In addition, all of these references focus on stability and none of them assume that demand and supply functions are jointly maximized at the critical density, an assumption that is critical for our analysis. Such a condition can also be imposed indirectly via the capacity $F_k$, if chosen appropriately. 

Other works concerned with optimal control of the CTM  \cite{gomes2006optimal,muralidharan2012optimal,como2016convexity} assume \emph{in addition} concavity of the demand and supply functions to keep the resulting optimization problems tractable.
Note that the former two references assume the classical triangular fundamental diagram, whose demand and supply functions are both monotonic and concave. We do not need concavity of demand and supply functions for our main result (Theorem \ref{theorem:main}) to hold. However, we will use monotonic and concave (in fact, triangular) fundamental diagrams in the evaluation (Section \ref{sec:application}), in order to be able to compute the optimal finite horizon solution as a benchmark.
\end{remark}

\begin{figure} 
	\centering
	\begin{subfigure}[b]{0.3\textwidth}
	  	\setlength{\unitlength}{0.1\textwidth}
  		\begin{picture}(10,9)
    			\put(0,0){\includegraphics[width=5cm]{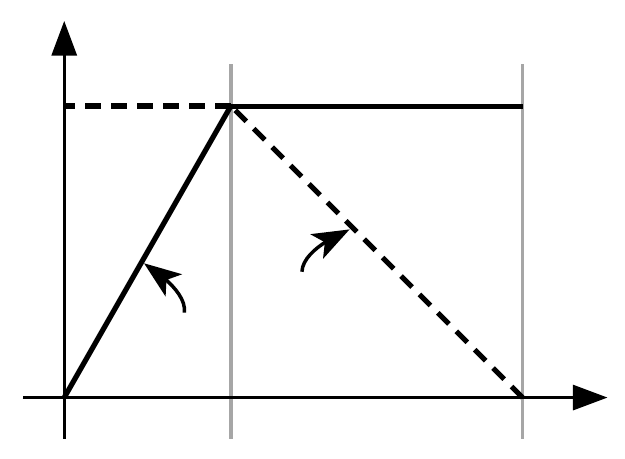}}
    			\put(9.5,0.2){$\rho_k$}
    			\put(3,0.2){$\rho^c_k$}
    			\put(7.8,0.2){$\bar\rho_k$}
    			\put(1.8,1.8){$d_k(\rho_k)$}
    			\put(4.4,2.4){$s_k(\rho_k)$}
 		\end{picture}
		\caption{PWA FD}
		\label{fig:ctm_fd_a}
	\end{subfigure} ~ 
	\begin{subfigure}[b]{0.3\textwidth}
	  	\setlength{\unitlength}{0.1\textwidth}
  		\begin{picture}(10,9)
    			\put(0,0){\includegraphics[width=5cm]{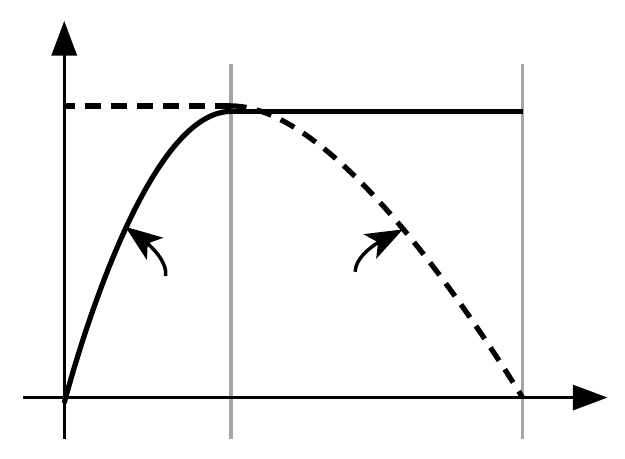}}
    			\put(9.5,0.2){$\rho_k$}
    			\put(3,0.2){$\rho^c_k$}
    			\put(7.8,0.2){$\bar\rho_k$}
    			\put(1.8,2.4){$d_k(\rho_k)$}
    			\put(5.1,2.4){$s_k(\rho_k)$}
 		\end{picture}
		\caption{Concave FD}	
		\label{fig:ctm_fd_b}
	\end{subfigure} ~ 
		\begin{subfigure}[b]{0.3\textwidth}
	  	\setlength{\unitlength}{0.1\textwidth}
  		\begin{picture}(10,9)
    			\put(0,0){\includegraphics[width=5cm]{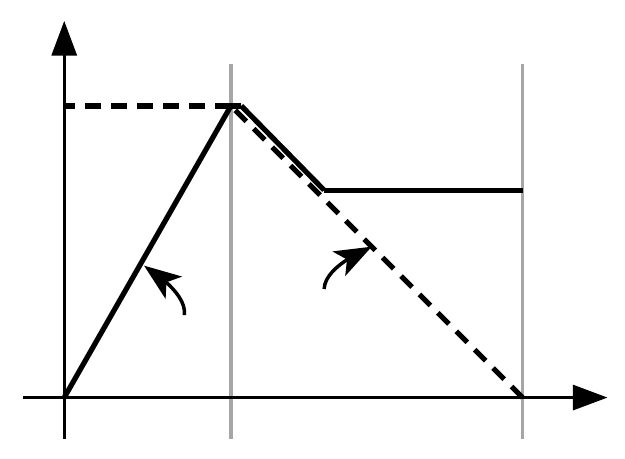}}
    			\put(9.5,0.2){$\rho_k$}
    			\put(3,0.2){$\rho^c_k$}
    			\put(7.8,0.2){$\bar\rho_k$}
    			\put(1.8,1.8){$d_k(\rho_k)$}
    			\put(4.4,2.2){$s_k(\rho_k)$}
 		\end{picture}
		\caption{non-monotonic FD}	
		\label{fig:ctm_fd_c}
	\end{subfigure} ~ 
	\caption{Different shapes of the fundamental diagram (FD) may be desirable in order to approximate real-world data. Figure (a) depicts the traditional PWA version. Figure (b) shows a version with monotonic demand and supply functions, satisfying Assumption \ref{assumption:bounded_slope} for suitable $\Delta t$. The depicted FD is also concave. Figure (c) shows a fundamental diagram with non-monotonic demand function, which does \emph{not} satisfy Assumption \ref{assumption:bounded_slope} for any $\Delta t$.}
	\label{fig:fd}
\end{figure}

We are now ready to formulate the main problem. Assume a freeway modeled by the monotonic CTM, subject to a demand profile $d_k(t)$ for $k \in \{0,\dots,n\}$ over a horizon $t \in \{0,\dots,T-1\}$. By controlling the metering rates, we seek to minimize the total time spent (TTS), given as the sum of the time drivers spent on the freeway and the waiting time on the metered onramps. The main problem is therefore given as:
\begin{subequations}
\begin{align}
\text{minimize} \quad & \TTS := \Delta t~\cdot \sum_{t=0}^{T} \sum_{k=1}^{n} \left( l_k \rho_k(t) + q_k(t) \right) \\
\text{subject to} \quad & \rho_k(t+1) = \rho_k(t) + \frac{\Delta t}{l_k} \left( \phi_{k-1}(t) + r_k(t) - \frac{1}{\bar\beta_k} \phi_k(t) \right) \\
 & q_k(t+1) = q_k(t) + \Delta t \cdot \left( w_k(t) - r_k(t) \right) \\
 & \phi_0(t) = w_0(t) \\
 & \phi_k(t) = \min\{d_k(\rho_k(t)), s_{k+1}(\rho_{k+1}(t))\} \label{eq:fd} \\ 
 & \phi_{n}(t) = d_n(\rho_n(t)) \\
 & 0 \leq r_k(t) \leq \bar r_k \label{eq:const_bounds}\\
 & 0 \leq q_k(t) \leq \bar q_k \label{eq:rampbounds2_alternative}\\
 & \rho_k(0), q_k(0), w_k(t) \text{ given} .
\end{align}
\label{eq:main_problem}
\end{subequations}
Note that the metering bounds \eqref{eq:rampbounds2} for time $t$ are equivalent to \eqref{eq:rampbounds2_alternative} at time $t+1$. Problem \eqref{eq:main_problem} is a standard problem in traffic control, which has been studied extensively. It is non-convex, because of the nonlinear fundamental diagram \eqref{eq:fd}. In \cite{gomes2006optimal}, it is shown that the special case of a piecewise-affine fundamental diagram admits an LP reformulation and hence can be solved efficiently. 
\begin{remark}
\label{remark:ramps}
In the statement of the main problem \eqref{eq:main_problem}, we introduce parameters $\bar r_k, \bar q_k$ and $\beta_k$ for every cell. In practice, not every cell will be equipped with both an onramp and an offramp and many cells might have neither. The equations \eqref{eq:main_problem} are general enough to capture all these situations and one can simply select $\bar r_k = \bar q_k = w_k(t) = 0 ,~\forall t,$ if no onramp is present in cell $k$. Similarly, choosing $\beta_k = 0$, which in turn implies that $\bar\beta_k = 1$, means no offramp is present in cell $k$.
\end{remark}

\section{A distributed controller} 
\label{sec:local_feedback}

The problem of minimizing TTS represents the global perspective, in which we optimize the whole system over the complete horizon. By contrast, one might also seek to optimize the here-and-now performance. This idea can be formalized by introducing the Total Distance Traveled (TDT), defined as
\begin{align*}
\text{TDT}(t) := \Delta t \cdot \sum_{k=1}^{n} l_k \phi_k(t) ,
\end{align*}
which is simply the total distance traveled by all vehicles on the mainline within time interval $t$. In this section, we formulate an one-step look-ahead controller, which maximizes TDT for the next time step in a greedy fashion:
\begin{subequations}
\begin{align}
\text{maximize} \quad & \text{TDT}(t+1) = \Delta t \cdot \sum_{k=1}^{n} l_k \phi_k(t+1)  \\
\text{subject to} \quad & \rho_k(t+1) = \rho_k(t) + \frac{\Delta t}{l_k} \left( \phi_{k-1}(t) + r_k(t) - \frac{1}{\bar\beta_k} \phi_k(t) \right) \label{eq:rho} \\
 & q_k(t+1) = q_k(t) + \Delta t \cdot \left( w_k(t) - r_k(t) \right) \\
 & \phi_0(t+1) = w_0(t+1) \\
 & \phi_k(t+1) = \min\{d_k(\rho_k(t+1)), s_{k+1}(\rho_{k+1}(t+1))\} \\ 
 & \phi_{n}(t+1) = d_n(\rho_n(t+1)) \\
 & 0 \leq r_k(t) \leq \bar r_k \label{eq:one_step_ahead_rampbounds1} \\
 & 0 \leq q_k(t+1) \leq \bar q_k \label{eq:one_step_ahead_rampbounds2} \\
 & \rho_k(t),q_k(t),w_k(t) \text{ given} . \label{eq:step_k}
\end{align}
\label{eq:one_step_ahead}
\end{subequations}
It is straightforward to show that the following distributed feedback controller provides a (non-unique) explicit solution to this  optimization problem. The explicit, feedback solution requires perfect model knowledge, but it does not rely on any online-optimization or traffic demand prediction:

\begin{lemma} 
An explicit solution to the one-step-ahead optimal control problem \eqref{eq:one_step_ahead} is given by the following feedback policy:
\begin{align}
r_k^*(t) := \left[ \frac{l_k}{\Delta t} \left( \rho^c_k - \rho_k(t) \right) + \frac{\phi_k(t)}{\bar\beta_k} - \phi_{k-1}(t) \right]_{\max \left\{ 0, \frac{1}{\Delta t} \left( q_k(t) - \bar q_k \right) + w_k(t) \right\} }^{\min   \left\{ \bar r_k, \frac{1}{\Delta t} q_k(t) + w_k(t) \right\}} \quad \forall k \in \{1,\dots,n\} .
\label{eq:best_effort}
\end{align}
\label{lemma:best_effort}
\end{lemma}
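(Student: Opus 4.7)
The plan is to exploit the fact that the one-step-ahead problem decouples across cells: the control $r_k(t)$ enters only the update equation \eqref{eq:rho} for $\rho_k(t+1)$, while all other quantities that determine the flows at time $t+1$ (the $\rho_j(t)$, $q_j(t)$, and $w_j(t)$) are fixed data. Consequently one can optimize over $r_k(t)$ separately for each $k$. Moreover, of all the flows in the objective, only $\phi_{k-1}(t+1)$, through the supply $s_k(\rho_k(t+1))$, and $\phi_k(t+1)$, through the demand $d_k(\rho_k(t+1))$, depend on $\rho_k(t+1)$. The per-cell subproblem is therefore to maximize $l_{k-1}\phi_{k-1}(t+1)+l_k\phi_k(t+1)$ over $r_k(t)$, with the boundary cases $k=1$ and $k=n$ being simpler because \eqref{eq:beginflow} and \eqref{eq:endflow} introduce no additional dependence on $\rho_k(t+1)$ through the missing neighbor.

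Next I would show that this sum is a unimodal function of $\rho_k(t+1)$, maximized at $\rho_k(t+1)=\rho_k^c$. This follows directly from Assumption \ref{assumption:bounded_slope}: for $\rho_k(t+1)\leq \rho_k^c$, $d_k$ is nondecreasing and $s_k$ is constant at its maximum, so both flows are weakly nondecreasing in $\rho_k(t+1)$; for $\rho_k(t+1)\geq \rho_k^c$, $d_k$ is at its maximum and $s_k$ is nonincreasing, so both are weakly nonincreasing. The maximum may be attained on a whole interval of densities, which is the source of the non-uniqueness asserted in the statement.

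Since \eqref{eq:rho} makes $\rho_k(t+1)$ an affine, strictly increasing function of $r_k(t)$, the per-cell objective is also unimodal in $r_k(t)$, and the unconstrained maximizer is obtained by inverting the density update at $\rho_k(t+1)=\rho_k^c$. This yields precisely the bracketed argument in \eqref{eq:best_effort}. The upper and lower saturation levels follow by collecting the constraints \eqref{eq:one_step_ahead_rampbounds1} and \eqref{eq:one_step_ahead_rampbounds2}, the latter reexpressed in terms of $r_k(t)$ via the onramp dynamics \eqref{eq:onramp}. By unimodality, whenever the unconstrained optimum lies outside this feasible interval the constrained optimum is attained at the nearer endpoint, which is exactly what the saturation operator in \eqref{eq:best_effort} implements.

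The step I expect to be slightly delicate is ensuring that the projection does not push $\rho_k(t+1)$ outside the admissible range $[0,\bar\rho_k]$, since otherwise $d_k$ and $s_k$ would not be defined at the resulting density and the unimodality argument could not be invoked globally. This is precisely the role of the sampling-time bounds in Assumption \ref{assumption:bounded_slope}: they guarantee that a single step of \eqref{eq:rho} cannot move $\rho_k(t+1)$ out of $[0,\bar\rho_k]$ for any admissible inflows and outflows. Nonemptiness of the saturation interval is routine, since $0\leq w_k(t)\leq \bar r_k$ and $0\leq q_k(t)\leq \bar q_k$ imply that $w_k(t)$ itself lies between the lower and upper saturation levels.
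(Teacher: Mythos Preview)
Your argument is essentially correct and close in spirit to the paper's, but the decomposition is organized differently. The paper decomposes \emph{per flow}: for each $\phi_k(t+1)=\min\{d_k(\rho_k(t+1)),s_{k+1}(\rho_{k+1}(t+1))\}$ it observes that the demand term depends only on $r_k(t)$ and the supply term only on $r_{k+1}(t)$, and then shows by a short case distinction that $r_k^*$ maximizes $d_k(\rho_k(t+1))$ while $r_{k+1}^*$ maximizes $s_{k+1}(\rho_{k+1}(t+1))$. Since the two arguments of the minimum are maximized simultaneously and independently, every flow $\phi_k(t+1)$ is individually maximized, which is slightly stronger than the TDT statement and is what the paper actually concludes. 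Your route decomposes \emph{per control}: you fix $k$, note that only the two adjacent flows depend on $r_k(t)$, and use Assumption~\ref{assumption:bounded_slope} to get unimodality of $l_{k-1}\phi_{k-1}(t+1)+l_k\phi_k(t+1)$ in $\rho_k(t+1)$ with peak at $\rho_k^c$. Both approaches rest on the same monotonicity facts; yours is arguably more natural if one thinks of the problem as $n$ scalar optimizations, while the paper's per-flow view makes it transparent that every single flow, not just the weighted sum, is maximized.

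One point to tighten: your sentence ``one can optimize over $r_k(t)$ separately for each $k$'' is not justified merely by the fact that $r_k(t)$ enters only the update for $\rho_k(t+1)$, since $\phi_{k-1}(t+1)$ and $\phi_k(t+1)$ still depend on the neighboring $r_{k-1}(t)$ and $r_{k+1}(t)$, so the objective is \emph{not} additively separable. What makes the coordinate-wise optimization legitimate is the additional fact, which your unimodality argument does establish, that the maximizer $r_k^*(t)$ is the same \emph{for every} choice of the other $r_j(t)$: the peak at $\rho_k^c$ and the saturation bounds are independent of $\rho_{k\pm 1}(t+1)$. It would be worth saying this explicitly. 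Your remark on keeping $\rho_k(t+1)\in[0,\bar\rho_k]$ is a well-posedness observation the paper's proof does not address; it is not needed for the optimality argument itself but is a fair point about the model.
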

\begin{proof} 
Consider an arbitrary flow $\phi_k(t+1)$, $k \in \{1,\dots,n-1\}$. It is easy to see that it only depends on the ramp metering rates in the adjacent cells $k$ and $k+1$ at time $t$, since we consider a one-step-ahead control problem. More precisely, it depends on the traffic demand $d_k(\rho_k(t+1))$ and the supply of free space $s_{k+1}(\rho_{k+1}(t+1))$. Consider the problem of maximizing this flow 
\begin{align*}
\max_{r(t)} ~ & \phi_k(t+1) 
=
\max_{r(t)} ~ \min\{d_k (\rho_k(t+1)), s_{k+1}(\rho_{k+1}(t+1))\} \\
&\leq
\min \big\{
\underbrace{\max_{r(t)} ~ d_k (\rho_k(t+1))}_{(1)}, ~
\underbrace{\max_{r(t)} ~ s_{k+1}(\rho_{k+1}(t+1))}_{(2)}
\big\}
\end{align*}
alone. It turns out that the policy \eqref{eq:best_effort} maximizes both terms in the previous equation. In order to show this, we resort to a case distinction:
\begin{itemize}
\item[(i)] If $r_k^*(t)$ does not saturate at the upper bound, it follows that $r_k^*(t) \geq \frac{l_k}{\Delta t} \left( \rho^c_k - \rho_k(t) \right) + \frac{\phi_k(t)}{\bar\beta_k} - \phi_{k-1}(t)$ and hence $\rho_k(t+1) = \rho_k(t) + \frac{\Delta t}{l_k} \left( \phi_{k-1}(t) + r^*_k(t) - \frac{1}{\bar\beta_k} \phi_k(t) \right) \geq \rho_k^c$. Therefore, $d_k(\rho_k(t+1)) = d_k(\rho_k^c)$ and the first term is maximized.
\item[(ii)] Otherwise, recall that $d_k(\cdot)$ is nondecreasing. Since $r_k^*(t)$ saturates at the upper bound w.r.t\ \eqref{eq:one_step_ahead_rampbounds1} and \eqref{eq:one_step_ahead_rampbounds2} of the feasible range, the first term is maximized.
\end{itemize}
Similarly, we can analyze the effects of the metering rate at cell $k+1$:
\begin{itemize}
\item[(i)] If $r_{k+1}^*(t)$ does not saturate at the lower bound, it follows that $r_{k+1}^*(t) \leq \frac{l_{k+1}}{\Delta t} \left( \rho^c_{k+1} - \rho_{k+1}(t) \right) + \frac{\phi_{k+1}(t)}{\bar\beta_{k+1}} - \phi_{k}(t)$ and hence $\rho_{k+1}(t+1) = \rho_{k+1}(t) + \frac{\Delta t}{l_{k+1}} \left( \phi_{k}(t) + r^*_{k+1}(t) - \frac{1}{\bar\beta_{k+1}} \phi_{k+1}(t) \right) \leq \rho_{k+1}^c$. Therefore, $s_{k+1}(\rho_{k+1}(t+1)) = s_{k+1}(\rho_{k+1}^c)$ and the second term is maximized.
\item[(ii)] Otherwise, recall that $s_{k+1}(\cdot)$ is nonincreasing. Since $r_{k+1}^*(t)$ does saturate at the lower bound of the feasible range, the second term is maximized.
\end{itemize}

Since both the first and the second term are maximized, we conclude that the proposed feedback law maximizes $\phi_k(t+1)$, for all $k \in \{1,\dots,n-1\}$. The case of $\phi_n(t+1)$ is similar, except that the second term is not present. Also $\phi_0(t+1)$ does not depend on the metering rates. All flows are jointly maximized and therefore, TDT is maximized as well.
\end{proof}

This controller aims at moving the local density to the critical density as fast as possible and in that sense, it is very similar in spirit to many existing and practically successful ramp metering policies, most notably Alinea \cite{papageorgiou1991alinea}. In the following, we will be referring to this one-step-ahead controller as the \emph{best-effort} (BE) controller. We stress that we do \emph{not} propose this controller as a new approach for ramp metering, as it relies on perfect knowledge of the fundamental diagram and the split ratios, which is unrealistic in practice. Rather, the best-effort controller will serve both as a useful tool in computing (bounds on) the optimal solution of the problem of minimizing TTS and as a proxy for the practical Alinea controller, for which a direct theoretical analysis would be difficult. 

There has been speculation in \cite{zhang2004optimal} on whether a control policy similar to the feedback law \eqref{eq:best_effort} minimizes TTS over the complete horizon. We will present two counterexamples showing that this is not necessarily the case in all scenarios.

\begin{example}[Lower bound saturation] 
\label{ex:lb}
Consider a two-cell freeway as depicted in Figure \ref{fig:counterexample_lb}, with a metered onramp at the second cell and an offramp with a large split ratio at the first cell, in this example $\beta_1 = 0.8$. Some traffic demand is present at the onramp in the beginning, but a spike in mainline demand is arriving on the mainline shortly after. The situation is sketched in Figure \ref{fig:counterexample_lb1}. Because of the large split ratio at the offramp, a spill back of congestion into the first cell severely reduces the total discharge flows and hence increases TTS. Once the spike in demand arrives, the constraint $0 \leq r_k(t)$ becomes active. However, cars already admitted to the freeway from the onramp can not be retracted to the onramp queue. For suitably chosen parameters, the optimal policy is to preemptively reduce the density in the second cell below the critical density, as depicted in Figure \ref{fig:counterexample_lb2}. The resulting reduction in outflow from the mainline is more than compensated by the increased discharge from the offramp. The demand is piecewise constant: $d_0(t) = 5000$ for $2\text{min} \leq t \leq 4 \text{min}$, $d_0(t) = 0$ otherwise and $d_2(t) = 2500$ for $0 \text{min} \leq t \leq 2 \text{min}$, $d_2(t) = 0$ otherwise. The parameters are chosen as $l_1 = l_2 = 1$km, $v_1 = v_2 = w_1 = w_2 = 100$km/h, $\rho_1^c = 100$(cars)/km, $\rho_2^c = 10$(cars)/km, $\bar\rho_1 = 200$(cars)/km, $\bar\rho_2 = 20$(cars)/km, $F_1 = 1000$(cars) and $F_2 = 500$(cars). Note that the parameter values have been chosen to amplify the suboptimality of the policy for illustration purposes, without considering whether such values are realistic. 
\begin{figure}[t] 
	\centering
	\begin{subfigure}[b]{0.38\textwidth}
		\includegraphics[width=\textwidth]{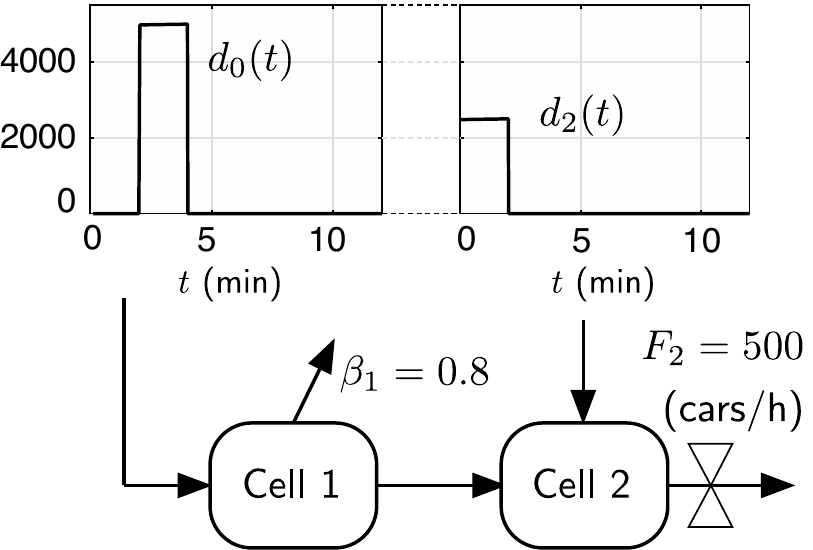}
		\caption{Freeway topology and demand patterns}
		\label{fig:counterexample_lb1}
	\end{subfigure} ~	
	\begin{subfigure}[b]{0.29\textwidth}
		\includegraphics[width=\textwidth]{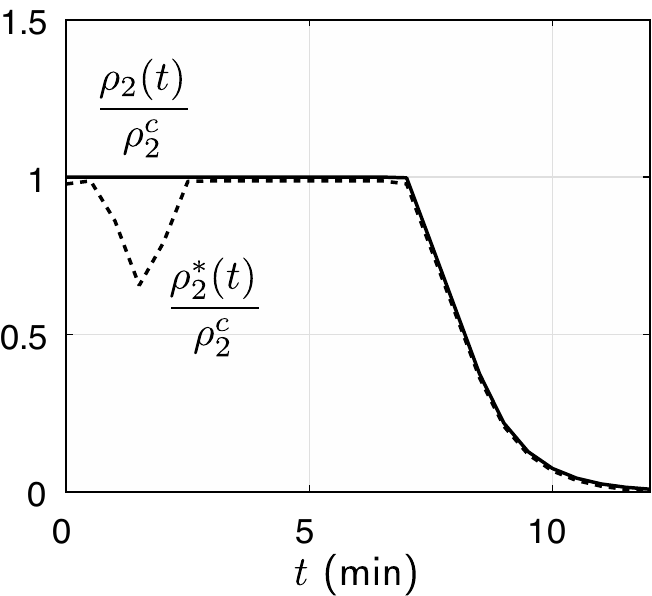}
		\caption{Density in cell 2}
		\label{fig:counterexample_lb2}
	\end{subfigure} ~
	\begin{subfigure}[b]{0.29\textwidth}
		\includegraphics[width=\textwidth]{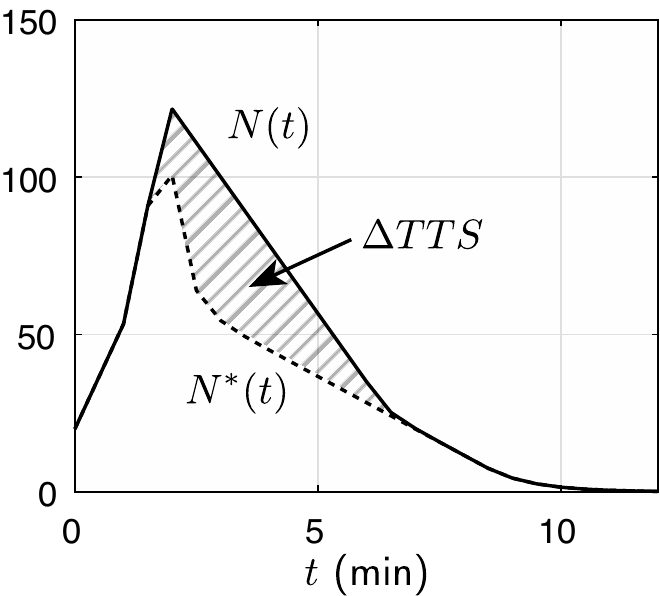}
		\caption{Total number of cars}
		\label{fig:counterexample_lb3}
	\end{subfigure}
	\caption{In case of a spike in mainline demand, it might be advisable to preemptively reduce the local density at a bottleneck below the critical density, in oder to accomodate (parts of) the traffic wave in the free space and avoid spill back of congestion. In Figure \ref{fig:counterexample_lb3}, the variables $N(t)$ and $N^*(t)$ represent the total number of cars on the freeway and at the onramps, so $N(t) := l_1 \rho_1(t) + l_2 \rho_2(t) + q_2(t)$ and likewise for $N^*(t)$. Therefore, the savings, i.e.,\ the difference in TTS between the two controllers, are proportional to the highlighted region.}
	\label{fig:counterexample_lb}
\end{figure} 
\end{example} 

Conversely, we can also construct an example in which the upper bound on the ramp metering rate prevents optimality of the BE controller:

\begin{example}[Upper bound saturation] 
\label{ex:ub}
Consider a one cell freeway with a metered onramp, as sketched in Figure \ref{fig:counterexample_ub1}. Some traffic demand is present at the onramp. A spike in mainline demand arrives at the beginning of the considered time interval, but afterwards, the mainline demand decays to zero. In case of BE control, the initial spike in mainline demand causes a congestion and hence BE control will use ramp metering to hold cars back on the queue. But when the mainline density decays again, ramp metering will not be able to release cars sufficiently fast to keep the density at the critical density due to inherent limits on the flow from the onramp, which require $r_k(t) \leq \bar r_k$. The optimal policy does not use ramp metering at all and allows for a larger congestion in the beginning, as depicted in Figure \ref{fig:counterexample_ub2}. Since there is no offramp, spill back of congestion is not an issue in this scenario. The demand is piecewise constant: $d_0(t) = 4000$ for $0\text{min} \leq t \leq 3 \text{min}$, $d_0(t) = 0$ otherwise and $d_1(t) = 1800$ for $0\text{min} \leq t \leq 5 \text{min}$, $d_1(t) = 0$ otherwise. The parameter values are chosen as $l_1 = 1$km, $v_1 = 100$km/h, $w_1 = 25$km/h, $\rho_1^c = 50$(cars)/km, $\bar\rho_1 = 250$(cars)/km and $F_1 = 5000$(cars).  Again, the parameter values have been chosen such as to amplify the suboptimality of the policy for illustration purposes. 
\begin{figure}[tp]
	\centering
		\begin{subfigure}[b]{0.38\textwidth}
		\includegraphics[width=\textwidth]{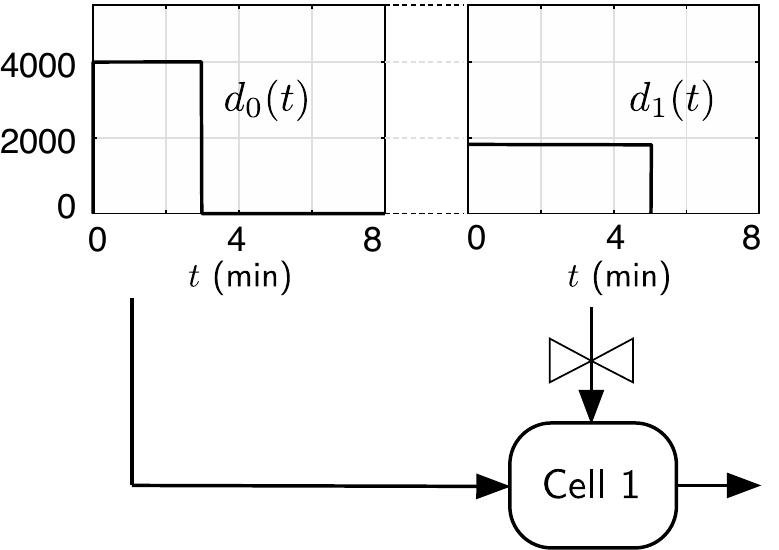}
		\caption{Freeway topology and demand patterns}
		\label{fig:counterexample_ub1}
	\end{subfigure} ~	
	\begin{subfigure}[b]{0.29\textwidth}
		\includegraphics[width=\textwidth]{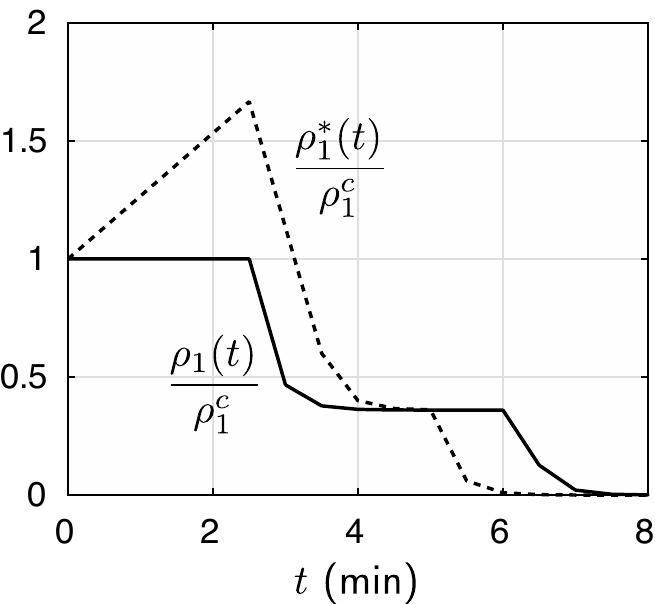}
		\caption{Density in cell 2}
		\label{fig:counterexample_ub2}
	\end{subfigure} ~
	\begin{subfigure}[b]{0.29\textwidth}
		\includegraphics[width=\textwidth]{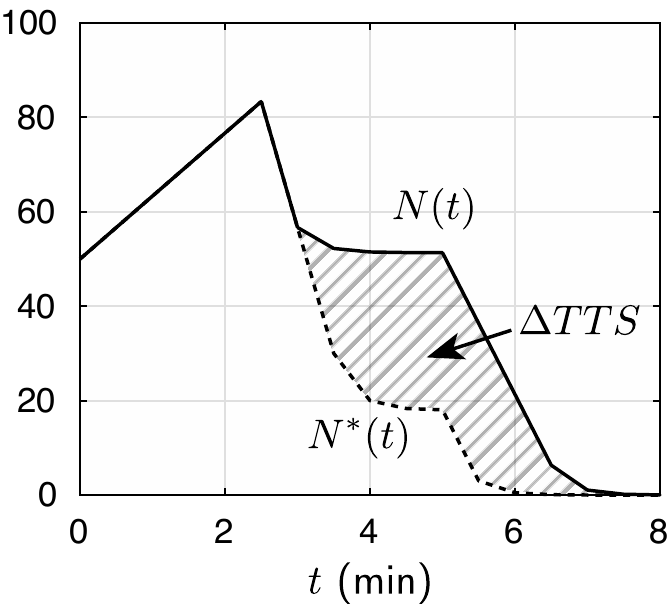}
		\caption{Total number of cars}
		\label{fig:counterexample_ub3}
	\end{subfigure}	
	\caption{If no spill back of the congestion is possible, it might be preferable (in the monotonic CTM) to allow a congestion to form on the mainline in order to ease the burden on the onramp, which becomes a bottleneck later on. The variables $N(t)$ and $N^*(t)$ represent the total number of cars on the freeway and at the onramps, so $N(t) := l_1 \rho_1(t) + q_1(t)$ and likewise for $N^*(t)$. Therefore, the savings, i.e.,\ the difference in TTS between the two controllers, are proportional to the highlighted region.}
	\label{fig:counterexample_ub}
\end{figure}
\end{example}

Closer inspection reveals that whether the ramp metering bounds \eqref{eq:rampbounds1} are active or not is essential for deriving a certificate of optimality. Indeed, in Section \ref{sec:optimality_conditions},  we derive explicit conditions relating the activation of these constraints to the optimality of the best-effort controller.

\section{Sufficient optimality conditions}  
\label{sec:optimality_conditions}

It turns out that even in the presence of controller saturation, sufficient optimality conditions can be found that ensure minimization of TTS. To derive these, we first apply a linear state transformation to the CTM to obtain a system description better suitable for our analysis. Then, we introduce conditions that characterize the state of onramps at individual time steps and show how these conditions, together with monotonicity of the system in the new coordinates, ensure global minimization of TTS.

Let us introduce the \emph{cumulative flow} $\Phi_k(t)$, defined as
\begin{align*}
 \Phi_k(t) =: \Phi_k(0) + \Delta t \cdot \sum_{\tau = 0}^{t-1} \phi_k(\tau) .
\end{align*}
with $\Phi_k(0) := \sum_{j = k+1}^{n} \frac{1}{\bar\beta_{(k+1,j-1)}} \left( l_j \rho_j(0) \right)$ and $\bar\beta_{(k,j)} := \prod_{i = k}^{j} \bar\beta_i$ if $k \leq j$ and $\bar \beta_{(k+1,k)} := 1$. The constant offset accounts for the initial density of the freeway, that is cars that have entered the freeway and travelled to cell $k$ or further before the beginning of the considered time horizon. This offset is convenient when expressing densities in terms of cumulative flow; it does not affect our optimality arguments.. Note that $\Phi_k(t)$ is dimensionless and can be interpreted as the number of cars that have passed cell $k$. Such a quantity, defined as a flow aggregate over time, is reminiscent of quantities used in network calculus, like \emph{cumulative arrivals} and \emph{cumulative departures}. Network calculus has been applied to the control of traffic networks \cite{varaiya2013max}, however, it seems that so far only store-and-forward models without congestion spill-back have been considered, but not the CTM.

We can we express the cumulative flow directly in terms of the original system states: first, we extend the model to include an additional cell with index $n+1$ at the end of the freeway. This cell is defined to have infinite storage capacity and collects all of the flow that leaves the freeway via the mainline. An infinite storage capacity can be formalized by choosing $s_{n+1}(\rho_{n+1}) \equiv +\infty$ such that all of the upstream demand is admitted and $d_{n+1}(\rho_{n+1}) \equiv 0$ such that no car leaves the cell. The purpose of this cell is to assist in bookkeeping of all the cars that have travelled through the freeway and it will therefore \textit{not} be considered in the computation of the Total Time Spent. Note that some cars also leave the freeway via the offramps, but because of the assumption of constant split ratios, we can reconstruct the outflows from the densities in cells $k \in \{1,\dots,n+1\}$. Let us also define the total inflow up to and including time $t$ as $R_k(t) := \Delta t \cdot \sum_{\tau = 0}^{t-1} r_k(\tau)$ and the cumulative, external demands as $W_k(t) := q_k(0) + \Delta t \cdot \sum_{\tau = 0}^{t-1} w_k(\tau)$. Both quantities are dimensionless and can be interpreted as the number of cars that have entered the freeway or arrived at the onramp, respectively\footnote{Note however, that the CTM is an averaged model, so neither $R_k(t)$ nor $W_k(t)$ are restricted to integer values.}.

We can now express the cumulative flows 
\begin{align}
\Phi_k(t) = \sum_{j = k+1}^{n+1} \frac{1}{\bar\beta_{(k+1 , j-1)}} \left( l_j \rho_j(t) - R_j(t) \right) .
\label{eq:actm_density}
\end{align}
as linear functions of the original states and inputs of the CTM. The whole CTM can be expressed equivalently in terms of the cumulative quantities:
\begin{align}
\label{eq:aggregated_ctm}
\left. \begin{array}{rrl}
\Phi_k(t+1) &= f_k (\Phi(t),R(t)) &:= \Phi_k(t) + \Delta t~ \phi_k(t) , \\ [1.5ex]
\text{with: } & \phi_0(t) &= w_0(t), \\
 & \phi_k(t+1) &= \min\{d_k(\rho_k(t+1)), s_{k+1}(\rho_{k+1}(t+1))\} , \\  
 & \phi_{n}(t) &= d_n(\rho_n(t))  \\ [1ex]
\text{and: } & \rho_k(t) &= \frac{1}{l_k} \left( \Phi_{k-1}(t) - \frac{1}{\bar\beta_k} \Phi_{k}(t) + R_k(t) \right) ,
\end{array} \right\} \forall k \in \{1,\dots,n\}
\end{align}
in which the cumulative flows $\Phi_k(t)$ are the states and the inflows $R_k(t)$ are the inputs. Note that we have introduced the shorthand notation $f_k(\Phi(t),R(t))$ for the systems equations. We will also use
$f(\cdot,\cdot) := \begin{bmatrix} f_1(\cdot,\cdot) \dots f_n(\cdot,\cdot) \end{bmatrix}^{\top}$.
The input constraints \eqref{eq:rampbounds1} and \eqref{eq:rampbounds2} can be expressed in term of the cumulative quantities as
\begin{subequations}
\label{eq:aggregated_rampbounds}
\begin{align}
R_k(t-1) &\leq R_k(t) \leq R_k(t-1) + \bar r_k , \\
W_k(t) - \bar q_k &\leq R_k(t) \leq W_k(t) . \label{eq:agg_rampbound2}
\end{align}
\end{subequations}
We will refer to the system described by equations \eqref{eq:aggregated_ctm} and \eqref{eq:aggregated_rampbounds} as the \emph{cumulative CTM} (CCTM). Monotonicity properties will facilitate the analysis of optimal control problems for the CCTM. 

\begin{lemma} 
\label{prop:monotonic_in_flows}
The systems equations $f_k(\Phi(t),R(t))$ for all $0 \leq k \leq n$ in the CCTM are nondecreasing in the cumulative flows $\Phi(t)$.
\end{lemma}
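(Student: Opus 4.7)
The plan is to prove monotonicity componentwise: for each pair $(k,j)$, show that $f_k(\Phi(t), R(t))$ is nondecreasing in $\Phi_j(t)$ with the remaining coordinates fixed. Inspecting \eqref{eq:aggregated_ctm}, the only way $\Phi$ enters $f_k = \Phi_k + \Delta t\,\phi_k$ is through the additive term $\Phi_k$ itself and through the densities $\rho_k$, $\rho_{k+1}$ inside $\phi_k = \min\{d_k(\rho_k), s_{k+1}(\rho_{k+1})\}$. So $f_k$ is trivially constant in $\Phi_j$ for $j \notin \{k-1,k,k+1\}$, and the cases $j=k-1$ and $j=k+1$ are easy: raising $\Phi_{k-1}$ raises $\rho_k$ and hence $d_k(\rho_k)$ (since $d_k$ is nondecreasing), while raising $\Phi_{k+1}$ lowers $\rho_{k+1}$ and hence raises $s_{k+1}(\rho_{k+1})$ (since $s_{k+1}$ is nonincreasing); in either case the min $\phi_k$ cannot decrease.

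The one genuinely delicate case is $j=k$, which I expect to be the main obstacle since the direct $+1$ contribution to $f_k$ fights against two potentially reducing effects on $\phi_k$: raising $\Phi_k$ by $\delta \geq 0$ lowers $\rho_k$ with coefficient $-1/(l_k\bar\beta_k)$ and raises $\rho_{k+1}$ with coefficient $+1/l_{k+1}$. This is exactly where Assumption \ref{assumption:bounded_slope} is decisive. Using Lipschitz continuity, $d_k(\rho_k)$ can drop by at most $c^d_k \cdot \delta/(l_k\bar\beta_k)$ and $s_{k+1}(\rho_{k+1})$ by at most $c^s_{k+1} \cdot \delta/l_{k+1}$, and the CFL-type bounds $\Delta t\, c^d_k \leq l_k \bar\beta_k$ and $\Delta t\, c^s_{k+1} \leq l_{k+1}$ make each of these at most $\delta/\Delta t$. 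Hence $\phi_k$ drops by at most $\delta/\Delta t$, so the net change in $f_k$ is at least $\delta - \Delta t \cdot (\delta/\Delta t) = 0$.

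To make this argument rigorous for arbitrary (not just coordinatewise) perturbations $\Phi \leq \tilde\Phi$, I would keep only the one-sided bounds coming from the unfavorable directions, namely
\[
\rho_k - \tilde\rho_k \leq \tfrac{1}{l_k \bar\beta_k}(\tilde\Phi_k - \Phi_k), \qquad \tilde\rho_{k+1} - \rho_{k+1} \leq \tfrac{1}{l_{k+1}}(\tilde\Phi_k - \Phi_k),
\]
since the contributions from $\tilde\Phi_{k-1} - \Phi_{k-1}$ and $\tilde\Phi_{k+1} - \Phi_{k+1}$ both have the "helpful" sign. Applying Lipschitz continuity together with Assumption \ref{assumption:bounded_slope} then yields $\tilde\phi_k \geq \phi_k - (\tilde\Phi_k - \Phi_k)/\Delta t$, from which $\tilde f_k \geq f_k$ follows immediately. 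The boundary indices dispose of themselves: $f_0 = \Phi_0 + \Delta t\, w_0(t)$ depends on $\Phi$ only through $\Phi_0$ with slope one, and $f_n$ lacks the supply term so only the demand half of the argument above is needed.
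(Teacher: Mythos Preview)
Your argument is correct and follows essentially the same route as the paper: both identify that $f_k$ depends on $\Phi$ only through $\Phi_{k-1},\Phi_k,\Phi_{k+1}$, handle the off-diagonal coordinates via the monotonicity of $d_k$ and $s_{k+1}$, and settle the diagonal case $j=k$ using the Lipschitz bounds together with the CFL-type inequalities of Assumption~\ref{assumption:bounded_slope}. The only cosmetic differences are that the paper first splits $f_k$ into the two branches $f_k^{(-)} = \Phi_k + \Delta t\, d_k(\rho_k)$ and $f_k^{(+)} = \Phi_k + \Delta t\, s_{k+1}(\rho_{k+1})$ and checks each separately before invoking ``min of nondecreasing is nondecreasing,'' and that your final paragraph on general perturbations $\Phi \leq \tilde\Phi$ is unnecessary since coordinatewise monotonicity already implies monotonicity in the product order.
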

\begin{proof} 
Using the definition of the CCTM \eqref{eq:aggregated_ctm}, we can write equivalently
\begin{equation*} 
\Phi_k(t+1) = \min \left\{ \Phi_k(t) + \Delta t \cdot d_k(\rho_k(t)), \Phi_k(t) + \Delta t \cdot s_{k+1}(\rho_{k+1}(t)) \right\}
\end{equation*}
for $k \in \{1,\dots,n-1\}$. The minimum of monotonic functions is monotonic. We can therefore verify monotonicity of the CTM by checking that both of the functions
\begin{align*}
f^{(-)}_k(\Phi_{k-1}(t), \Phi_k(t)) &:=  \Phi_k(t) + \Delta t \cdot d_k \left( \frac{1}{l_k} \left( \Phi_{k-1}(t) - \frac{1}{\bar\beta_k} \Phi_{k}(t) + R_k(t) \right) \right) , \\
f^{(+)}_k(\Phi_k(t), \Phi_{k+1}(t)) &:= \Phi_k(t) + \Delta t \cdot s_{k+1} \left( \frac{1}{l_{k+1}} \left( \Phi_{k}(t) - \frac{1}{\bar\beta_{k+1}} \Phi_{k+1}(t) + R_{k+1}(t) \right) \right) ,
\end{align*}
are nondecreasing in $\Phi_{k-1}(t), \Phi_k(t)$ and $\Phi_{k+1}(t)$.
\begin{itemize} 
\item[(i)] To verify monotonicity in $\Phi_{k-1}(t)$, first note that $f^{(+)}_k$ does not depend on $\Phi_{k-1}(t)$, so it is trivially nondecreasing. 
Furthermore, $\rho_k(t)$ is nondecreasing in $\Phi_{k-1}(t)$, since by assumption $l_k >0$. Also, $d_k(\cdot)$ is nondecreasing according to Assumption \ref{assumption:bounded_slope}. Recalling that $\Delta t > 0$, we conclude that $f^{(-)}_k$, which is a composition of the previously analyzed functions, is nondecreasing in $\Phi_{k-1}(t)$.
\item[(ii)] To verify monotonicity in $\Phi_{k+1}(t)$, first note that $f^{(-)}_k$ does not depend on $\Phi_{k+1}(t)$, so it is trivially nondecreasing. 
Furthermore, $\rho_k(t)$ is nondecreasing in $\Phi_{k+1}(t)$, since by assumption $l_k >0$ and $\bar\beta_{k+1} > 0$. Also, $s_{k+1}(\cdot)$ is nonincreasing according to Assumption \ref{assumption:bounded_slope}. Recalling that $\Delta t > 0$, we conclude that $f^{(+)}_k$, which is a composition of the previously analyzed functions, is nondecreasing in $\Phi_{k+1}(t)$.
\item[(iii)] To verify monotonicity of $f^{(-)}_k$ in $\Phi_k(t)$, recall that according to Assumption \ref{assumption:bounded_slope}, $d_k(\cdot)$ is Lipschitz continuous, so $ | d_k(x+\Delta x) - d_k(x) | \leq c_k^d | \Delta x | $. Therefore:
\begin{align*}
f^{(-)}_k&(\Phi_{k-1}(t), \Phi_k(t) + \Delta \Phi) - f^{(-)}_k(\Phi_{k-1}(t), \Phi_k(t)) \\
&= \Delta \Phi +  \Delta t \cdot \left[ d_k \left( \frac{1}{l_k} \left( \Phi_{k-1}(t) - \frac{1}{\bar\beta_k} ( \Phi_{k}(t) + \Delta \Phi ) + R_k(t) \right) \right) \right. \\
& \left. \hspace{1cm} - d_k \left( \frac{1}{l_k} \left( \Phi_{k-1}(t) - \frac{1}{\bar\beta_k} \Phi_{k}(t) + R_k(t) \right) \right)  \right] \\
& \geq  \Delta \Phi - \Delta t \cdot c_k^d \cdot \left| \frac{1}{l_k \bar\beta_k} \Delta \Phi \right| \\
& \geq \Delta \Phi \cdot \left( 1 - \Delta t \cdot \frac{l_k \bar\beta_k}{\Delta t} \cdot \frac{1}{l_k \bar\beta_k} \right) = 0 .
\end{align*}
In the first inequality, we use Lipschitz continuity and in the last inequality, we replace the Lipschitz constant for the demand function with its upper bound from Assumption \ref{assumption:bounded_slope}. Monotonicity of $f^{(+)}_k$ in $\Phi_k(t)$ can be shown in a similar way.
\end{itemize} 
 Also, the cases $k=0$ and $k=n$ follow along the same lines.
\end{proof}

Similarly, the dependency of the states on the inputs is characterized by the following relationship:
\begin{lemma} 
\label{prop:monotonic_in_inputs}
The systems equation $f_k(\Phi(t),R(t))$ for $k$ fixed ($1 \leq k \leq n-1$)  is nondecreasing in input $R_k(t)$ and nonincreasing in input $R_{k+1}(t)$. Also, $f_0(\Phi(t),R(t))$ is nonincreasing in $R_1(t)$ and $f_n(\Phi(t),R(t))$ is nondecreasing in $R_n(t)$.
\end{lemma}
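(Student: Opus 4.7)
The plan is to argue directly from the definition of $f_k$, using the relation $\rho_j(t) = \frac{1}{l_j}\bigl(\Phi_{j-1}(t) - \frac{1}{\bar\beta_j}\Phi_j(t) + R_j(t)\bigr)$ to track how each input $R_i(t)$ enters the dynamics, combined with the monotonicity of the demand and supply functions from Assumption \ref{assumption:bounded_slope}. The key observation is that among the densities $\rho_1(t),\dots,\rho_n(t)$, the input $R_k(t)$ appears only inside $\rho_k(t)$, and it enters with a positive coefficient $1/l_k > 0$; hence $\rho_k(t)$ is nondecreasing in $R_k(t)$ and all other $\rho_j(t)$ are independent of $R_k(t)$.

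First I would handle the generic case $1 \leq k \leq n-1$. Using $f_k(\Phi(t),R(t)) = \Phi_k(t) + \Delta t \, \min\{d_k(\rho_k(t)), s_{k+1}(\rho_{k+1}(t))\}$, note that $\Phi_k(t)$ does not depend on $R(t)$ at all. For the dependence on $R_k(t)$, only the term $d_k(\rho_k(t))$ is affected, and since $d_k$ is nondecreasing and $\rho_k(t)$ is nondecreasing in $R_k(t)$, the composition $d_k(\rho_k(t))$ is nondecreasing in $R_k(t)$; the other argument $s_{k+1}(\rho_{k+1}(t))$ is constant in $R_k(t)$. Then I would invoke the elementary fact that $\min\{g(x),c\}$ is nondecreasing in $x$ whenever $g$ is nondecreasing and $c$ is independent of $x$, concluding that $f_k$ is nondecreasing in $R_k(t)$. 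The argument for $R_{k+1}(t)$ is symmetric: only $\rho_{k+1}(t)$ depends on $R_{k+1}(t)$, nondecreasingly, and then the monotonicity of $s_{k+1}$ (nonincreasing, by Assumption \ref{assumption:bounded_slope}) flips the sign, so $s_{k+1}(\rho_{k+1}(t))$ is nonincreasing in $R_{k+1}(t)$, while $d_k(\rho_k(t))$ is constant in $R_{k+1}(t)$; the same min-preserves-monotonicity argument then gives that $f_k$ is nonincreasing in $R_{k+1}(t)$.

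It remains to dispatch the two boundary cases. For $k = n$, the flow is $\phi_n(t) = d_n(\rho_n(t))$ with no supply term, so only the $d_n$ branch appears; since $\rho_n(t)$ is nondecreasing in $R_n(t)$ and $d_n$ is nondecreasing, $f_n$ is nondecreasing in $R_n(t)$. For $k = 0$, the upstream inflow $\phi_0(t) = w_0(t)$ is purely exogenous, so $f_0(\Phi(t),R(t)) = \Phi_0(t) + \Delta t\, w_0(t)$ is constant in every component of $R(t)$; in particular it is trivially nonincreasing in $R_1(t)$ (and also nondecreasing, as a constant function is monotone in both directions).

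There is no substantive obstacle here: the statement is essentially a bookkeeping corollary of Assumption \ref{assumption:bounded_slope} together with the affine dependence of $\rho_k(t)$ on $R_k(t)$ encoded in \eqref{eq:actm_density}. The only mildly delicate point is to be explicit that because each input $R_i(t)$ only appears in exactly one density $\rho_i(t)$, there is no cross-coupling in the partial ordering, so $d_k$ and $s_{k+1}$ can be treated separately before taking the min. Unlike the proof of Lemma \ref{prop:monotonic_in_flows}, no Lipschitz bound is needed, because the sampling time constraint is not required to tame a self-dependence of $\rho_k(t)$ on $R_k(t)$ through an offsetting term — there simply is no such offset in the input channel.
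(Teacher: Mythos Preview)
Your proposal is correct and follows essentially the same route as the paper's own proof: split $f_k$ into the demand branch $f_k^{(-)}$ and the supply branch $f_k^{(+)}$, observe that $R_k(t)$ and $R_{k+1}(t)$ each enter only one branch through the corresponding density, invoke the monotonicity of $d_k$ and $s_{k+1}$ from Assumption~\ref{assumption:bounded_slope}, and use that the minimum of monotone functions is monotone. Your remark that no Lipschitz bound is needed here (in contrast to Lemma~\ref{prop:monotonic_in_flows}) is a nice clarifying observation, and your handling of the $k=0$ case as a constant function is slightly more explicit than the paper's ``follows along the same lines.''
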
 
The proof of Lemma \ref{prop:monotonic_in_inputs} follows the same ideas as the proof of Lemma \ref{prop:monotonic_in_flows} and can be found in Appendix \ref{proof:monotonic_in_flows}. Next, we will define conditions that will be useful in deriving sufficient optimality conditions for best-effort control.

\begin{definition} 
\label{def:local_optimality}
A cell $k$ with metered onramp is called \emph{restrictive} at time $t$ if
\begin{itemize}
\item[(i)] the onramp is not completely full $q_k(t) < \bar q_k$ and the upstream flow into the cell is limited by non-maximal supply of free space, i.e.,\ $s_k(\rho_{k}(t)) = \phi_{k-1}(t) < F_{k-1}$, or 
\item[(ii)] the onramp is not empty $ q_k(t) > 0$ and the downstream flow out of the cell is limited by non-maximal demand, i.e.,\  $d_k(\rho_k(t)) = \phi_k(t) < F_k$. 
\end{itemize}
Otherwise, we call the cell \emph{nonrestrictive}.
\end{definition} 

This definition formalizes the intuitive idea to operate each onramp so as to maximize local mainline flow in the subsequent time step $t+1$: In case (i), the existence of free space on the onramp indicates that a potential congestion on the mainline, which is equivalent to the flow being restricted by the traffic supply, might have been prevented or at least reduced by keeping additional cars on the onramp. Conversely, the existence of free-flow conditions on the mainline below the maximal flow, as indicated by the restriction of the flow by the traffic demand, raises the question as to why any cars should be held back on the onramp queue. The idea will be illustrated in the following example.

\begin{figure} 
	\centering
	\begin{subfigure}[b]{0.2\textwidth}
	  	\setlength{\unitlength}{0.1\textwidth}
  		\begin{picture}(10,10)
    			\put(-0.1,1){\includegraphics[width=3.5cm]{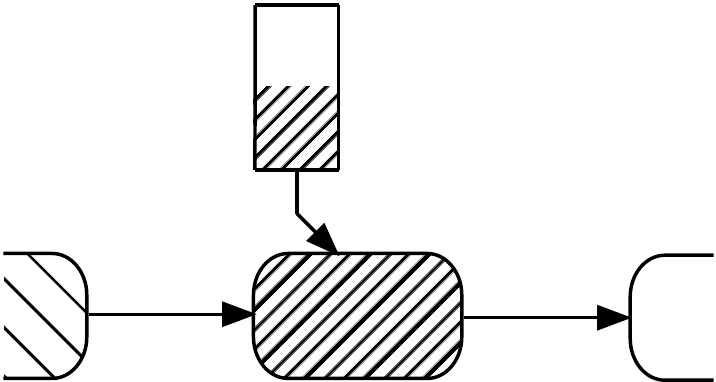}}
    			\put(-0.6,4.1){$\begin{array}{l} \rho_{k-1} \\> 0 \end{array}$}
    			\put(10.2,1.5){\Large ?}
    			\put(3.9,0){$\rho_{k} = \bar\rho_{k}$}
    			\put(2.7,7.4){$0 < q_{k} < \bar q_k$}
 		\end{picture}
		\caption{Restrictive}
		\label{fig:restrictive_ex1}
	\end{subfigure} \hspace{0.7cm} 
	\begin{subfigure}[b]{0.2\textwidth}
	  	\setlength{\unitlength}{0.1\textwidth}
  		\begin{picture}(10,10)
    			\put(-0.1,1){\includegraphics[width=3.5cm]{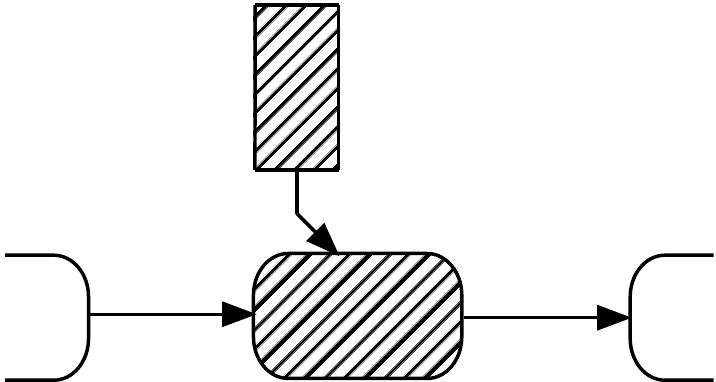}}
    			\put(-0.1,1.5){\Large ?}
    			\put(10,1.5){\Large ?}
    			\put(3.9,0){$\rho_{k}^c < \rho_k$}
    			\put(2.7,7.4){$q_{k} = \bar q_k$}
 		\end{picture}
		\caption{Nonrestrictive}
		\label{fig:restrictive_ex2}
	\end{subfigure} \hspace{0.7cm} 
		\begin{subfigure}[b]{0.2\textwidth}
	  	\setlength{\unitlength}{0.1\textwidth}
  		\begin{picture}(10,10)
    			\put(-0.1,1){\includegraphics[width=3.5cm]{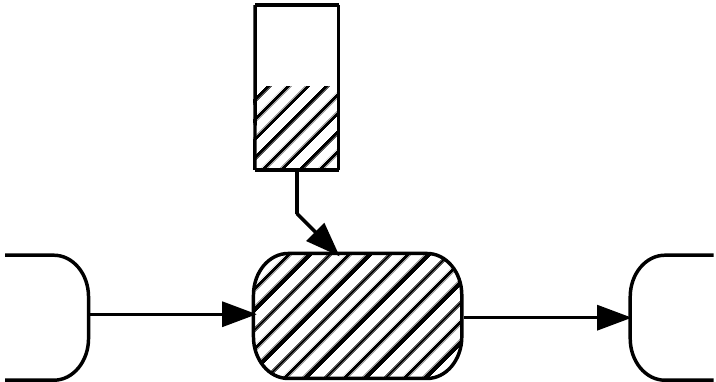}}
    			\put(-0.6,4.1){$\begin{array}{l} \rho_{k-1} \\= 0 \end{array}$}
    			\put(10.1,1.5){\Large ?}
    			\put(2.4,0){$\rho_k^c < \rho_{k} < \bar\rho_k$}
    			\put(2.7,7.4){$0 < q_{k} < \bar q_k$}
 		\end{picture}
		\caption{Nonrestrictive}
		\label{fig:restrictive_ex3}
	\end{subfigure} \hspace{0.7cm} 
		\begin{subfigure}[b]{0.2\textwidth}
	  	\setlength{\unitlength}{0.1\textwidth}
  		\begin{picture}(10,10)
    			\put(0,1){\includegraphics[width=3.5cm]{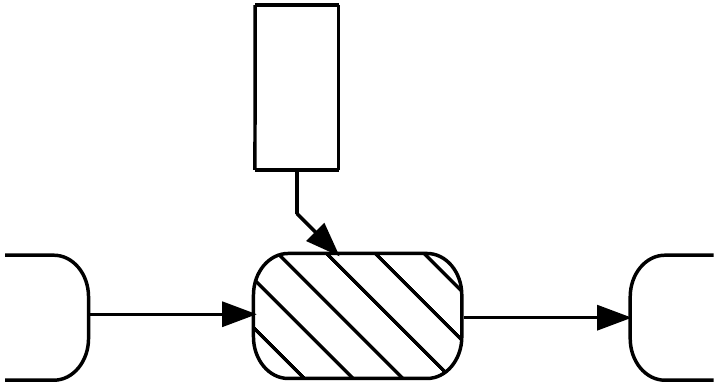}}
    			\put(-0.1,1.5){\Large ?}
    			\put(10,1.5){\Large ?}
    			\put(3.9,0){$\rho_{k} = \rho_{k}^c$}
    			\put(4.1,5){\Large ?}
 		\end{picture}
		\caption{Nonrestrictive}
		\label{fig:restrictive_ex4}
	\end{subfigure} ~ 
	\caption{Example situations in which cell $k$ is restrictive or nonrestrictive. A question mark indicates that the corresponding density or ramp occupancy is not relevant for determining if cell $k$ is restrictive or not.}
	\label{fig:restrictive_ex}
\end{figure}

\begin{example}
\label{ex:restrictive}
Consider the situations sketched in Figure \ref{fig:restrictive_ex}, which exemplify when a cell is restrictive. In all these examples, we assume that $s_k(\rho_k(t)) > 0$ if $\rho_k(t) < \bar\rho_k$ and $d_k(\rho_k(t)) > 0$ if $\rho_k(t) > 0$. 
\begin{itemize}
\item[(i)] A completely congested cell implies $s_k(\rho_k(t)) = s_k(\bar\rho_k) = 0$. Since the upstream cell is nonempty, it follows that $s_k(\rho_k(t)) = 0 < d_{k-1}(\rho_{k-1}(t))$. But even though cell $k$ is completely congested, the onramp is not completely filled $q_k(t) < \bar q_k$. Therefore, cell $k$ is restrictive.
\item[(ii)] Cell $k$ is congested $\rho_k(t) > \rho_k^c$ , but in contrast to case (i), the maximal number of cars is stored on the onramp $q_k(t) = \bar q_k$. Therefore, this cell is nonrestrictive.
\item[(iii)] Cell $k$ is congested $\rho_k^c < \rho_k(t) < \bar\rho_k$ and the onramp is not completely filled $q_k(t) < \bar q_k$. However, in contrast to case (i), the upstream cell $k+1$ is empty, therefore, the congestion in cell $k$ does not obstruct the mainline flow $s_k(\rho_k(t)) > 0 = d_{k-1}(0)$ and cell $k$ is nonrestrictive
\item[(iv)] Cell $k$ is operating exactly at the critical density. Therefore, $s_k(\rho_k(t)) = s_k(\rho_k^c) \geq F_{k-1}$ and $d_k(\rho_k(t)) = d_k(\rho_k^c) \geq F_k$ and the cell is nonrestrictive, for arbitrary upstream demand, downstream supply and local ramp occupancy.
\end{itemize}
\end{example}

\begin{remark}
Definition \ref{def:local_optimality} characterizes cells with onramps. Proofs in the remainder of this section will analyze pairs of adjacent cells, to determine the behavior of the flow between these cells. To avoid unnecessary case distinctions with regard to which of these cells are equipped with onramps and which are not, note that cells without an onramp can still be expressed in the CCTM framework by $\bar r_k = \bar q_k = W_k(t) = 0 , ~\forall t,$ for all cells $k$ without an onramp, exactly in the same way as described in Remark \ref{remark:ramps} for the CTM. Then constraints \eqref{eq:aggregated_rampbounds} imply $R_k(t) = 0 , ~\forall t$, as desired. For such cells $k$, it always holds that $0 = q_k(t) = \bar q_k(t)$ and therefore, these cells are classified as \emph{nonrestrictive} at all times.
\end{remark}

It is important to note that a cell operating exactly at the critical density $\rho_c$ is always nonrestrictive in the sense of Definition \ref{def:local_optimality}: According to Lemma \ref{lemma:best_effort}, both upstream and downstream flow are jointly maximized at this density. In fact, the best-effort controller, which tracks the critical density locally, is able to keep cells nonrestrictive under certain conditions, as explained by the following lemma:
\begin{lemma} 
\label{lemma:feedback_optimality}
Assume that for a freeway controlled by the best-effort controller, the onramp at cell $k$ at time $t$ satisfies
\begin{align*}
0 \leq \left[ \frac{l_k}{\Delta t} \left( \rho^c_k - \rho_k(t) \right) + \frac{\phi_k(t)}{\bar\beta_k} - \phi_{k-1}(t) \right]^{ \frac{1}{\Delta t} q_k(t) + w_k(t) }_{ \frac{1}{\Delta t} \left( q_k(t) - \bar q_k \right) + w_k(t) } \leq \bar r_k,
 \end{align*}
 i.e., the best-effort policy \eqref{eq:best_effort} is not affected by the constraints \eqref{eq:rampbounds1}. Then, cell $k$ is nonrestrictive at time $t+1$.
\end{lemma}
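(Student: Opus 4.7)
The plan is a case analysis on where the best-effort metering rate $r_k^*(t)$ lands inside its feasible interval, which under the hypothesis of the lemma is determined exclusively by the queue-based bounds from \eqref{eq:rampbounds2}. Let $x := \frac{l_k}{\Delta t}(\rho_k^c - \rho_k(t)) + \frac{\phi_k(t)}{\bar\beta_k} - \phi_{k-1}(t)$ denote the bracketed expression from \eqref{eq:best_effort}, and let $l := \frac{1}{\Delta t}(q_k(t) - \bar q_k) + w_k(t)$, $u := \frac{1}{\Delta t} q_k(t) + w_k(t)$ denote the queue-based lower and upper bounds. Three cases arise: (A) $l < x < u$, so $r_k^*(t) = x$; (B) $x \ge u$, so $r_k^*(t) = u$; (C) $x \le l$, so $r_k^*(t) = l$.

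I would then read off the effect on the next state directly from the conservation laws. In case (A), substitution of $r_k^*(t) = x$ into \eqref{eq:density_dynamics} yields $\rho_k(t+1) = \rho_k^c$ exactly. In case (B), the queue equation \eqref{eq:onramp} gives $q_k(t+1) = 0$, while $r_k^*(t) = u \le x$ substituted into \eqref{eq:density_dynamics} gives $\rho_k(t+1) \le \rho_k^c$. Case (C) is symmetric: $q_k(t+1) = \bar q_k$ and $\rho_k(t+1) \ge \rho_k^c$.

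Next, I would use these conclusions to rule out both conditions in Definition \ref{def:local_optimality}. By Assumption \ref{assumption:bounded_slope}, the supply $s_k$ is constant and equal to $s_k(\rho_k^c)$ on $[0, \rho_k^c]$ with $s_k(\rho_k^c) \ge F_{k-1}$, and the demand $d_k$ is constant and equal to $d_k(\rho_k^c)$ on $[\rho_k^c, \bar\rho_k]$ with $d_k(\rho_k^c) \ge F_k$. In case (A), $\rho_k(t+1) = \rho_k^c$ delivers both $s_k(\rho_k(t+1)) \ge F_{k-1}$ and $d_k(\rho_k(t+1)) \ge F_k$, so neither the supply-binding inequality in (i) nor the demand-binding inequality in (ii) can hold strictly. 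In case (B), $q_k(t+1) = 0$ directly falsifies the precondition $q_k(t+1) > 0$ of (ii), while $\rho_k(t+1) \le \rho_k^c$ yields $s_k(\rho_k(t+1)) \ge F_{k-1}$ and thereby falsifies (i). Case (C) is dual: $q_k(t+1) = \bar q_k$ falsifies (i), and $\rho_k(t+1) \ge \rho_k^c$ falsifies (ii).

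The main obstacle I foresee is the careful bookkeeping of the saturation direction in the bracket $[\cdot]_l^u$: one must translate upper (respectively lower) saturation into the correct inequality $\rho_k(t+1) \le \rho_k^c$ (respectively $\rho_k(t+1) \ge \rho_k^c$) and then combine it with the fact that $s_k$ and $d_k$ are maximized on opposite sides of $\rho_k^c$ in order to dominate the capacities $F_{k-1}$ and $F_k$ via their defining minimum. Once these two observations are in place, the remaining work is the algebraic substitution of $r_k^*(t)$ into the conservation laws \eqref{eq:density_dynamics} and \eqref{eq:onramp}.
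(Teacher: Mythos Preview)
Your proposal is correct and is precisely the argument the paper has in mind: the paper only provides a sketch (``plug the feedback policy into the CTM equations and make a case distinction for each affine piece of the control law''), and your three cases (A), (B), (C) are exactly those pieces, with the subsequent verification against Definition~\ref{def:local_optimality} carried out in full. The only cosmetic point is that the boundary cases $x=u$ and $x=l$ should be absorbed into (B) and (C) (or (A)) rather than excluded by strict inequalities, but the conclusions in those overlaps coincide, so nothing is lost.
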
 
\begin{proof}[Sketch of proof] 
Note that this condition is sufficient, but not necessary. Sufficiency can be readily verified by plugging the feedback policy in the CTM equations and making a case distinction for each affine piece of the control law.
\end{proof} 
We will now use monotonicity properties of the CCTM to show that the cumulative flows are also maximized over longer periods of time, if the freeway is nonrestrictive. For the upcoming analysis, it will be useful to introduce the following notation for the maximal achievable (cumulative) flow
\begin{equation*}
\begin{array}{rlll}
\Phi^*_k(t) :=& \text{max} & \Phi_k(t) & \\ [2ex]
	& \text{subject to} & \Phi(\tau+1) = f(\Phi(\tau),R(\tau)) & \forall \tau \in \{0,\dots,t-1\} \\ [1ex]
	& & R_k(\tau) \leq R_k(\tau+1) \leq R_k(\tau) + \bar r_k & \forall \tau \in \{0,\dots,t-1\}  \\ [1ex]
	& & W_k(\tau) - \bar q_k \leq R_k(\tau) \leq W_k(\tau) & \forall \tau \in \{0,\dots,t-1\} \\ [1ex]
	& & \text{Initial state $\Phi(0)$ and $R(0) = \bf{0}$ given.} & ,
\end{array}
\end{equation*}
at a particular time $t$ and cell $k$.

\begin{lemma} 
\label{lemma:one_step_optimality}
Assume that $\Phi_j(t) = \Phi^*_j(t)$ for $j \in \{k-1,k,k+1\}$ and that the cells $k$ and $k+1$ are nonrestrictive at time $t$. Then $\Phi_k(t+1) = \Phi^*_k(t+1)$. 
\end{lemma}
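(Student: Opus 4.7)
The strategy is to pick an arbitrary feasible trajectory $(\Phi',R')$ and show $\Phi'_k(t+1) \leq \Phi_k(t+1)$; combined with the bound $\Phi_k(t+1) \leq \Phi^*_k(t+1)$ built into the definition of $\Phi^*$, this establishes the claim. I would reuse the decomposition from the proof of Lemma \ref{prop:monotonic_in_flows}, writing $\Phi_k(t+1) = \min\{f^{(-)}_k, f^{(+)}_k\}$ with $f^{(-)}_k = \Phi_k(t) + \Delta t\, d_k(\rho_k(t))$ and $f^{(+)}_k = \Phi_k(t) + \Delta t\, s_{k+1}(\rho_{k+1}(t))$, and similarly for the primed trajectory. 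Since $\Phi'_k(t+1)$ is bounded from above by both of its own $f^{(\pm)}_k$ components, it suffices to compare $\Phi_k(t+1)$ to whichever one is active, and the nonrestrictive hypotheses will pin down exactly which comparison is needed.

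First consider the easy case $\phi_k(t) \geq F_k$: then $\phi_k(t) = F_k \geq \phi'_k(t)$, and since $\Phi_k(t) = \Phi^*_k(t) \geq \Phi'_k(t)$, the conclusion follows immediately. Otherwise $\phi_k(t) < F_k$, and I would split on which branch of the min is attained. If $\phi_k(t) = d_k(\rho_k(t))$, then cell $k$ being nonrestrictive (item (ii) of Definition \ref{def:local_optimality}) forces $q_k(t) = 0$, hence $R_k(t) = W_k(t) \geq R'_k(t)$. Substituting into the density identity and using $\Phi_{k-1}(t) \geq \Phi'_{k-1}(t)$ yields the key bound $l_k(\rho'_k(t) - \rho_k(t)) \leq \tfrac{1}{\bar\beta_k}(\Phi_k(t) - \Phi'_k(t))$. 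Lipschitz continuity of $d_k$ then gives
\[
d_k(\rho_k(t)) - d_k(\rho'_k(t)) \geq -\frac{c^d_k}{l_k\bar\beta_k}(\Phi_k(t) - \Phi'_k(t)),
\]
so $f^{(-)}_k(\Phi(t),R(t)) - f^{(-)}_k(\Phi'(t),R'(t)) \geq (\Phi_k(t) - \Phi'_k(t))\bigl(1 - \Delta t\, c^d_k/(l_k\bar\beta_k)\bigr) \geq 0$ by Assumption \ref{assumption:bounded_slope}. Combined with $\Phi'_k(t+1) \leq f^{(-)}_k(\Phi'(t),R'(t))$, this subcase is closed.

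The remaining subcase $\phi_k(t) = s_{k+1}(\rho_{k+1}(t)) < F_k$ is symmetric: nonrestrictiveness of cell $k+1$ (item (i)) now forces $q_{k+1}(t) = \bar q_{k+1}$, hence $R_{k+1}(t) \leq R'_{k+1}(t)$, and the analogous computation using the Lipschitz constant $c^s_{k+1}$ together with the complementary bound $\Delta t\, c^s_{k+1} \leq l_{k+1}$ yields $f^{(+)}_k(\Phi(t),R(t)) \geq f^{(+)}_k(\Phi'(t),R'(t))$. The main obstacle I anticipate is recognising that pure state monotonicity of $f_k$ (Lemma \ref{prop:monotonic_in_flows}) is too weak by itself, since the primed trajectory could in principle have ``stored'' inflow on the onramp to build up a higher local density and hence a higher $d_k(\rho'_k(t))$ than $d_k(\rho_k(t))$. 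The nonrestrictive hypothesis closes that escape route by pinning the relevant cumulative ramp inflow $R_k(t)$ (respectively $R_{k+1}(t)$) to an extreme value, and the stability-like bound on $\Delta t$ in Assumption \ref{assumption:bounded_slope} is then exactly strong enough to absorb the residual Lipschitz term into the cumulative-flow gap $\Phi_k(t) - \Phi'_k(t)$.
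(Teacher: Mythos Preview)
Your argument is correct. It differs from the paper's proof mainly in organisation rather than in substance, but the organisation is genuinely different and worth noting.

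The paper first \emph{relaxes} the feasible input set by dropping the constant bounds \eqref{eq:const_bounds}, so that the admissible $R(\tau)$ decouple in time; it then invokes Lemma~\ref{prop:monotonic_in_flows} (state monotonicity) to replace the state at time $t$ by the componentwise maximum $\Phi^*(t)$, obtaining the upper bound $\Phi^*_k(t+1) \le \max_{R\in\mathcal R(t)} f_k(\Phi^*(t),R)$. The remaining case distinction is organised by \emph{which nonrestrictive clause} holds for each of the two cells (conditions (A)/(B) for cell $k$ and (C)/(D) for cell $k+1$), and Lemma~\ref{prop:monotonic_in_inputs} (input monotonicity) is invoked as a black box to push the saturated $R_k$, $R_{k+1}$ to the extremes of $\mathcal R_k(t)$, $\mathcal R_{k+1}(t)$.

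You instead compare against an arbitrary feasible competitor directly, with no relaxation step, and organise the case split by \emph{which branch of the $\min$ is active} for the reference trajectory. This is a bit sharper: because $\Phi'_k(t+1)$ is always bounded above by \emph{both} $f^{(-)}_k(\Phi',R')$ and $f^{(+)}_k(\Phi',R')$, you only ever need to beat the competitor on the one branch that equals $\Phi_k(t+1)$, and you then infer the relevant ramp-saturation ($q_k=0$ or $q_{k+1}=\bar q_{k+1}$) from nonrestrictiveness rather than assuming it up front. The price is that you reprove the Lipschitz/monotonicity estimate inline instead of citing Lemmas~\ref{prop:monotonic_in_flows} and~\ref{prop:monotonic_in_inputs}; the paper's route is more modular, yours more self-contained. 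Either way the same three ingredients (state monotonicity, input monotonicity via ramp saturation, and the CFL-type bound $\Delta t\,c^d_k\le l_k\bar\beta_k$) are doing the work.
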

\begin{proof} 
For notational convenience, let us introduce a relaxation of the set of feasible metering rates
\begin{align}
\mathcal{R}_k(t) := \left\{ R_k : W_k(t) - \bar q_k \leq R_k \leq W_k(t) \right\} \label{eq:relaxed_rampbounds}
\end{align}
 at time $t$. Note that according to this definition, the metering rates satisfy the constraints arising from the limited space on the onramps \eqref{eq:rampbounds2}, but not (necessarily) the additional, constant bounds \eqref{eq:rampbounds1}. Also note that the feasible sets for the metering rates $\mathcal{R}_k(t)$ are decoupled in time in the cumulative formulation, i.e.,\ they do not depend on the system state or previous actions. In the following, we will also use the shorthand notation $\mathcal{R}(t) := \otimes_{k = 1}^{n} \mathcal{R}_k(t)$.
Now, we can write the maximal cumulative flows as
\begin{equation*}
\begin{array}{rlll}
\Phi^*_k(t+1) \leq& \text{max} & f_k(\Phi(t),R(t)) & \\ [1ex]
	& \text{subject to} & \Phi(\tau+1) = f(\Phi(\tau),R(\tau)) & \forall \tau \in \{0,\dots,t-1\} \\ [1ex]
	& & W_k(\tau) - \bar q_k \leq R_k(\tau) \leq W_k(\tau) & \forall \tau \in \{0,\dots,t-1\} \\ [1ex]
	& & \text{Initial state $\Phi(0)$ and $R(0) = \bf{0}$ given.} \\ [3ex]
	=& \max_{R \in \mathcal{R}(t)} & f_k (\Phi^*(t),R) .
\end{array}
\end{equation*}
Note that we only have an inequality in the first step, since we operate with a relaxation of the feasible set of inputs. This ensures that the constraints become decoupled in time. Then, we use monotonicity of $\Phi_k(t+1) = f_k(\Phi(t),R(t))$ in the cumulative flows $\Phi(t)$ to obtain the final equality.  Using this preliminary result and the fact that cells $k$ and $k+1$ are nonrestrictive at time $t$, we will now show that also $\Phi_k(t+1) \geq \Phi_k^*(t+1)$ and hence $\Phi_k(t+1) = \Phi_k^*(t+1)$. 

Cell $k$ is nonrestrictive by assumption and therefore, it satisfies either 
\begin{itemize}
\item[(A)] $d_k(\rho_k(t)) > \phi_k(t) ~\vee~ \phi_{k}(t) = F_k$ or
\item[(B)] $q_k(t) = 0$
\end{itemize}
or both, according to Definition \ref{def:local_optimality}. Similarly, cell $k+1$ is also nonrestrictive and satisfies either 
\begin{itemize}
\item[(C)] $s_{k+1}(\rho_{k+1}(t)) > \phi_k(t) ~\vee~ \phi_k(t) = F_{k}$ or 
\item[(D)] $q_{k+1}(t) = \bar q_{k+1}$
\end{itemize}
or both. We proceed with a case distinction, depending on which of these conditions are satisfied for cell $k$ and cell $k+1$, respectively:

 \begin{itemize} 
\item[(i)] Assume (A) and (C) hold, that is, the flow $\phi_k(t)$ is neither limited by submaximal supply or submaximal demand. 
It follows that $\phi_k(t) = \min \left\{d_{k} (\rho_k), F_k, s_{k+1} \left(\rho_{k+1} \right) \right\} = F_k$ and hence
\begin{align*}
\Phi_k(t+1) &= \Phi_k^*(t) + \Delta t~\cdot F_k \\
 &\geq \max_{R \in \mathcal{R}(t)} f_k(\Phi^*(t),R) \\
 &= \Phi^*_k(t+1) .
\end{align*}
\item[(ii)] Assume (B) and (D) hold, that is, the onramp in cell $k$ is empty 
 and the onramp in cell $k+1$ is full
 . The first condition implies that $R_k(t) = W_k(t)$, according to equation \eqref{eq:agg_rampbound2}. Conversely, the second condition implies that $R_{k+1}(t) = W_{k+1}(t) - \bar q_{k+1}$. It follows that 
\begin{align*}
\Phi_k(t+1) &= \Phi^*_k(t) + \Delta t~\cdot \min \left\{d_{k} (\rho_k),  s_{k+1} \left(\rho_{k+1} \right) \right\} \\
 &= \Phi^*_k(t) + \Delta t~\cdot \min \left\{d_{k} \Big( \rho_k \big( \Phi^*(t),W_{k}(t) \big) \Big), s_{k+1} \Big( \rho_{k+1} \big( \Phi^*(t), W_{k+1}(t) - \bar q_{k+1} \big) \Big) \right\} \\
 &= \Phi^*_k(t) + \Delta t~\cdot \min \left\{d_{k} \Big( \rho_k \big( \Phi^*(t), \max_{R \in \mathcal{R}_k(t) } R \big) \Big), s_{k+1} \Big(\rho_{k+1} \big( \Phi^*(t), \min_{R \in \mathcal{R}_{k+1}(t)} R \big) \Big) \right\} \\
 &\geq \max_{R \in \mathcal{R}(t)} f_k (\Phi^*(t),R) \\
 &= \Phi^*_k(t+1) .
\end{align*}
Here, we use the lower and upper bounds on the cumulative ramp metering rates $R_k(t)$ according to the definition of $\mathcal{R}_k(t)$ in equation \eqref{eq:relaxed_rampbounds}. Finally, the inequality holds because the cumulative flow $\Phi_k(t+1)$ function is nondecreasing in $R_k(t)$ and nonincreasing in $R_{k+1}(t)$, according to Lemma \ref{prop:monotonic_in_inputs}.
\item[(iii,iv)] In the third case, assume that  (A) and (D) hold, 
whereas in the fourth case, assume that (B) and (C) hold. 
The derivations to show that $\Phi_k(t+1) \geq \Phi^*_k(t+1)$ can easily be constructed by combining parts from cases (i) and (ii).
\end{itemize}
All cases compatible with Definition \ref{def:local_optimality} have been verified and the proof has thus been completed.
\end{proof}

A maximization of TDT leads to a minimization of the total discharge flows at every time instant, which in turn corresponds to a minimization of TTS over the whole horizon, as we will show next.

\begin{theorem} 
\label{theorem:main}
Assume that every cell $k \in \{1,\dots,n\}$ of a freeway is nonrestrictive, for the entire horizon $t \in \{1,\dots,T-1\}$. Then TTS is minimized.
\end{theorem}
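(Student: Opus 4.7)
The plan is twofold: first, re-express the $\TTS$ objective in terms of the cumulative flows $\Phi_k(t)$ of the CCTM so that minimizing $\TTS$ becomes equivalent to maximizing each $\Phi_k(t)$ componentwise; second, use Lemma \ref{lemma:one_step_optimality} inductively over $t$ to conclude that the nonrestrictive assumption forces $\Phi_k(t) = \Phi_k^*(t)$ for every cell $k$ and every time $t$ in the horizon.

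For the first step, I would sum the conservation laws \eqref{eq:density_dynamics} and \eqref{eq:onramp} over all cells and telescope in time. A straightforward bookkeeping argument using constant split ratios yields, for every $t$,
\begin{equation*}
\sum_{k=1}^{n} \bigl( l_k \rho_k(t) + q_k(t) \bigr) \;=\; N(0) + I(t) - D(t),
\end{equation*}
where $N(0)$ is the initial number of cars in the system, $I(t) := \bigl(\Phi_0(t)-\Phi_0(0)\bigr) + \sum_{k=1}^{n}\bigl(W_k(t)-q_k(0)\bigr)$ is the cumulative exogenous arrival (fixed by the demand profile), and $D(t) := \bigl(\Phi_n(t)-\Phi_n(0)\bigr) + \sum_{k=1}^{n}\tfrac{\beta_k}{\bar\beta_k}\bigl(\Phi_k(t)-\Phi_k(0)\bigr)$ is the cumulative number of cars that have left the system via the downstream exit or via an offramp. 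Since the coefficient of every $\Phi_k(t)$ in $D(t)$ is nonnegative (strictly positive for $k=n$), summing over $t$ shows that minimizing $\TTS$ is equivalent, up to demand-dependent constants, to maximizing $\sum_{t=1}^{T}\sum_{k} c_k \Phi_k(t)$ with $c_k \ge 0$ for all $k$.

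For the second step, I would induct on $t$. At $t=0$ the equality $\Phi_k(0) = \Phi_k^*(0)$ holds by definition of the initial state. Assume $\Phi_j(t) = \Phi_j^*(t)$ for all $j$ at some time $t$. Fix any $k \in \{1,\dots,n\}$. By hypothesis cell $k$ is nonrestrictive at time $t$, and so is cell $k+1$ (either because $k+1 \le n$ and the hypothesis applies directly, or because $k = n$ and the auxiliary downstream cell $n+1$ has no onramp and is therefore nonrestrictive at all times, as noted in the remark following Example \ref{ex:restrictive}). The upstream cumulative flow also satisfies $\Phi_{k-1}(t) = \Phi_{k-1}^*(t)$, with the boundary convention that $\Phi_0(t)$ is determined exogenously by $w_0$ and hence trivially at its maximum. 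Lemma \ref{lemma:one_step_optimality} then yields $\Phi_k(t+1) = \Phi_k^*(t+1)$. Repeating over $k$ and over $t = 0,\dots,T-1$ gives $\Phi_k(t) = \Phi_k^*(t)$ throughout the horizon, and combined with the first step this delivers minimum $\TTS$.

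The main obstacle I anticipate is the conservation accounting of the first step: the representation of $\TTS$ as a nonnegative linear combination of the $\Phi_k(t)$ is what makes componentwise maximization of the cumulative flows the correct surrogate for $\TTS$ minimization, and the offramp bookkeeping (with the $\beta_k/\bar\beta_k$ factors appearing naturally from the $1/\bar\beta_k$ in the density dynamics) has to be done carefully, in particular for the terminal cell whose mainline exit contributes an additional unit coefficient. Once this monotone reformulation is in place, the induction is essentially mechanical, since Lemma \ref{lemma:one_step_optimality} was tailored precisely to propagate pointwise optimality of the cumulative flows one time step under the nonrestrictive assumption.
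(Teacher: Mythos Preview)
Your proposal is correct and follows essentially the same route as the paper: first rewrite $\TTS$ (via the conservation laws and constant split ratios) as a constant minus a nonnegative linear combination of the cumulative flows $\Phi_k(t)$, with the coefficient of $\Phi_n$ equal to $1$ and that of $\Phi_k$ equal to $\beta_k/\bar\beta_k$, so that $\TTS$ minimization reduces to componentwise maximization of the $\Phi_k(t)$; then propagate $\Phi_k(t)=\Phi_k^*(t)$ forward in $t$ using Lemma~\ref{lemma:one_step_optimality}. Your treatment of the boundary cells ($\Phi_0$ exogenous, cell $n{+}1$ trivially nonrestrictive) is in fact slightly more explicit than the paper's, which simply invokes Lemma~\ref{lemma:one_step_optimality} ``for every cell''.
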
 
\begin{proof} 
The initial conditions are assumed to be fixed, so $\Phi_k^*(0) = \Phi_k(0)$, for all $k$. Because Definition~\ref{def:local_optimality} holds for every cell in every time step, we can apply Lemma \ref{lemma:one_step_optimality} for every cell and $t=1$, yielding $\Phi_k(1) = \Phi_k^*(1)$ for all $k$. Employing induction, we can proceed in the same manner to show that $\Phi_k(t) = \Phi_k^*(t)$ over the complete control horizon. Thus, the cumulative flows are maximized jointly for every cell and every step.

It remains to be shown that joint maximization of all cumulative flows implies minimization of TTS. To do so, the time spent on the mainline
\begin{align*}
\Delta t \cdot \sum_{k = 1}^n l_k \rho_k(t) = \Delta t \cdot \left( \Phi_0(t) - \Phi_{n}(t) + \sum_{k=1}^{n} \left( R_k(t) - \frac{\beta_k}{\bar \beta_k} \Phi_k(t) \right) \right)
\end{align*}
and the time spent on the metered onramps
\begin{align*}
\Delta t \cdot \sum_{k = 1}^n q_k(t) = \Delta t \cdot \sum_{k=1}^n \left( W_k(t) - R_k(t) \right)
\end{align*}
have to be expressed in terms of the cumulative variables. If we sum according to the definition of TTS, we find
\begin{align*}
\TTS &= \Delta t \cdot \sum_{t=0}^{T} \left( \Phi_0(t) - \Phi_{n}(t) + \sum_{k=1}^{n} \left( W_k(t) - \frac{\beta_k}{\bar \beta_k} \Phi_k(t) \right) \right) .
\end{align*}
Analyzing the sign of the coefficients yields the desired result
\begin{align*}
\text{minimize}~ \TTS &=  \Delta t~\cdot \sum_{t=0}^{T} \Bigg( \Phi_0(t) - \max \left\{ \Phi_{n}(t) \right\} + \sum_{k=1}^{n} \bigg( W_k - \underbrace{ \frac{\beta_k}{\bar \beta_k} }_{\geq 0} \max \left\{ \Phi_k(t) \right\} \bigg) \Bigg) \\
 &=  \Delta t~\cdot \sum_{t=0}^{T} \left( W_0(t) - \Phi^*_{n}(t) + \sum_{k=1}^{n} \left( W_k(t) - \frac{\beta_k}{\bar \beta_k} \Phi^*_k(t) \right) \right) .
\end{align*}
All minimizations and maximizations in the previous equations are to be understood as an optimization over the set of feasible ramp metering patterns, with respect to the fixed initial condition and traffic demands and the CTM system model. Note that $\bar\beta_k \in (0,1]$ for all $k$ by definition of the split ratios.
The last equality confirms that TTS is indeed minimized for $\Phi_k(t) = \Phi^*_k(t)$ in case of a nonrestrictive freeway and thus concludes the proof.
\end{proof}

It is important to keep in mind that Definition \ref{def:local_optimality} provides only \emph{sufficient} optimality conditions. In particular, it is possible that, depending on the freeway parameters and the external demand profile, there does not exist a policy which satisfies the stated optimality conditions. It also becomes clear that the difficulties in solving the minimal TTS problem \eqref{eq:main_problem} hinge mainly on the constant metering bounds \eqref{eq:const_bounds}: recall that the range of admissible ramp metering rates $r_k(t)$ is given by
\begin{align*}
\min \left\{ 0, \frac{1}{\Delta t} \left(\bar q_k - q_k(t) \right) + w_k(t) \right\} \leq r_k(t) \leq \min \left\{ \bar{r}_k , \frac{1}{\Delta t} q_k(t) + w_k(t) \right\} ,
\end{align*}
i.e.,\ the metering rate is constrained by constant bounds and by bounds that depend on the onramp queue length. For the purpose of deriving a lower bound, one can formulate a relaxed min-TTS problem, in which the constant bounds on the ramp metering rates are removed.
\begin{corollary}
\label{corollary}
Consider the problem of minimizing TTS \eqref{eq:main_problem} \emph{without} the constant bounds on the metering rates \eqref{eq:const_bounds}. Simulate the system with metering rates chosen according to the feedback law
\begin{align*}
 r_k(t) = \left[ \frac{l_k}{\Delta t} \left( \rho^c_k - \rho_k(t) \right) + \frac{\phi_k(t)}{\bar\beta_k} - \phi_{k-1}(t) \right]_{ \frac{1}{\Delta t} q_k(t) + d_k(t) }^{ \frac{1}{\Delta t} \left( q_k(t) - \bar q_k \right) + d_k(t) } ~,
 \end{align*}
which we call the \emph{relaxed} BE controller, for all $k \in \{1,\dots,n\}$ and all $t \in \{0,\dots,T\}$. Then, the result of this simulation is a solution to the relaxed optimization problem and the corresponding cost $\TTS_\text{LB}$ is a valid lower bound on the optimal value $\TTS^*$ of the original problem \eqref{eq:main_problem}, that is, $\TTS_{\text{LB}} \leq \TTS^*$.
\end{corollary}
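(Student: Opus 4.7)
The plan is to chain together a feasibility-enlargement argument, Lemma \ref{lemma:feedback_optimality}, and Theorem \ref{theorem:main}. Writing $\TTS_{\text{rel}}^*$ for the optimal value of the relaxed problem (problem \eqref{eq:main_problem} without the constant bounds \eqref{eq:const_bounds}), I would first note that removing constraints can only enlarge the feasible set, so every trajectory admissible for \eqref{eq:main_problem} is admissible for the relaxed problem. Hence $\TTS_{\text{rel}}^* \leq \TTS^*$, and it suffices to show that the trajectory generated by the relaxed BE controller attains $\TTS_{\text{rel}}^*$, so that $\TTS_{\text{LB}} = \TTS_{\text{rel}}^* \leq \TTS^*$.

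Second, I would verify that along the trajectory produced by the relaxed BE controller, every cell $k \in \{1,\dots,n\}$ is nonrestrictive at every time $t \in \{1,\dots,T-1\}$. The feedback law in the statement of the corollary is precisely the policy \eqref{eq:best_effort} with the two constant-bound clips removed, leaving only the queue-driven saturation interval. Because the constraints \eqref{eq:rampbounds1} are absent in the relaxed problem, the hypothesis of Lemma \ref{lemma:feedback_optimality}, namely that the BE policy is not affected by \eqref{eq:rampbounds1}, is satisfied vacuously at every time step. Applying the lemma at each $t$ yields nonrestrictiveness at $t+1$, so a trivial induction gives the property over the whole horizon. Feasibility of the simulated trajectory for the relaxed problem is immediate, since the queue-based bounds \eqref{eq:rampbounds2} are enforced by the remaining saturation, and these are equivalent to \eqref{eq:rampbounds2_alternative}.

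Third, with the nonrestrictiveness hypothesis of Theorem \ref{theorem:main} now in force along the relaxed BE trajectory, Theorem \ref{theorem:main} asserts that this trajectory minimizes TTS among all feasible trajectories of the relaxed problem. Thus $\TTS_{\text{LB}} = \TTS_{\text{rel}}^*$, and combining with the first step yields $\TTS_{\text{LB}} \leq \TTS^*$.

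The only nontrivial point is the bookkeeping in the second step: one must confirm that with \eqref{eq:rampbounds1} dropped, the feedback law quoted in the corollary really does coincide with the reduction of \eqref{eq:best_effort} to the queue-based saturation interval, so that Lemma \ref{lemma:feedback_optimality} applies without modification (in particular, the lemma's premise speaks only of the constant bounds, not the queue bounds). This is essentially just reading off the two saturation clips, but it is worth writing down to avoid sweeping a boundary case under the rug.
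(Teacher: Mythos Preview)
Your proposal is correct and follows exactly the route the paper takes: the paper's own justification is the single sentence ``optimality of the relaxed BE controller (for the relaxed problem) follows immediately from Theorem~\ref{theorem:main} and Lemma~\ref{lemma:feedback_optimality},'' which you have simply unpacked, together with the standard relaxation inequality $\TTS_{\text{rel}}^* \leq \TTS^*$. Your remark about the bookkeeping is apt, since the saturation bounds in the corollary as printed have the subscript and superscript interchanged (and write $d_k(t)$ where $w_k(t)$ is meant), but this is a typographical issue in the statement rather than a gap in your argument.
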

Note that optimality of the relaxed BE controller (for the relaxed problem) follows immediately from Theorem \ref{theorem:main} and Lemma \ref{lemma:feedback_optimality}. 
The feedback policy defined in Corollary \ref{corollary} is potentially infeasible for the original problem, hence its purpose is only the computation of a lower bound. Conversely, an upper bound on the optimal TTS can be computed efficiently by simulating the best-effort controller, which respects all bounds on the metering rates. Together, one can obtain a-posteriori bounds  $\TTS_{\text{LB}} \leq \TTS^* \leq \TTS_{\text{BE}}$ for any given external traffic demand profile via two computationally inexpensive forward simulations. These bounds will be useful for the evaluation in the following section, for cases where the optimality conditions based on nonrestrictivness are not satisfied.

\section{Application} 
\label{sec:application}

\begin{figure}[t] 
	\centering
	\begin{subfigure}[a]{0.34\textwidth}
	\vspace{-5.6cm}
		\includegraphics[width=\textwidth]{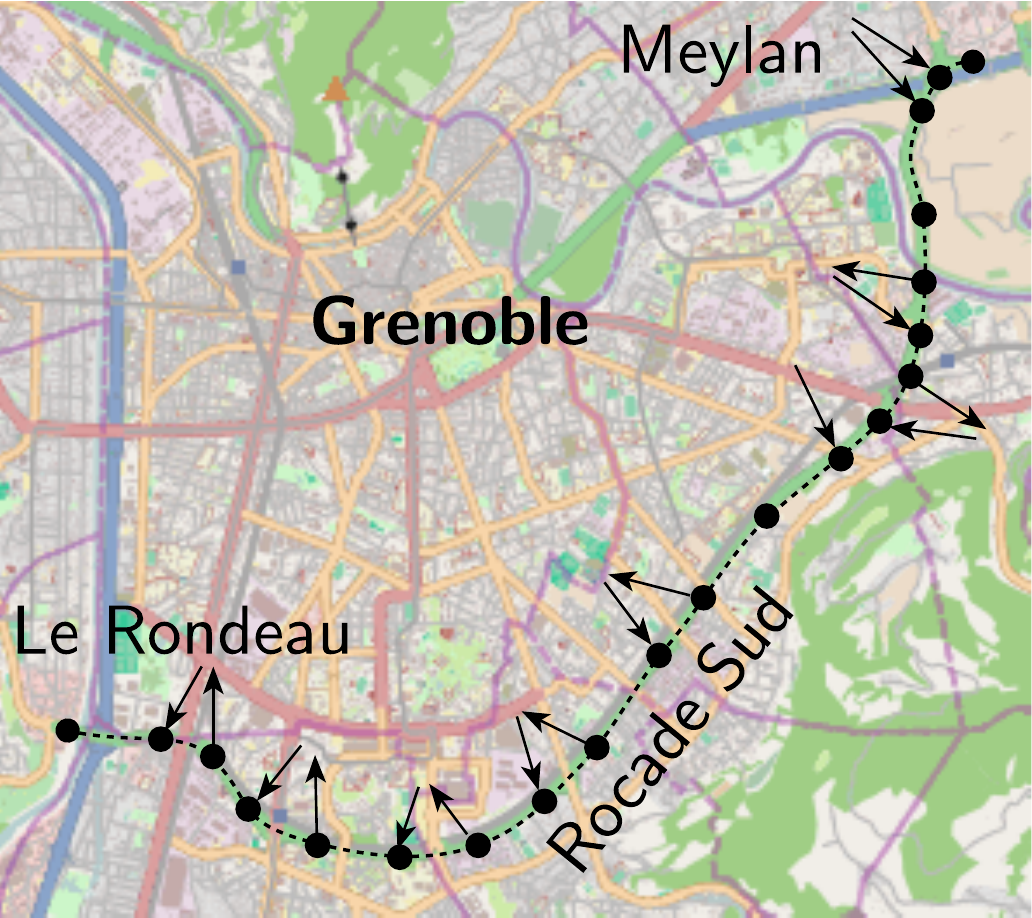}
		\caption{Freeway topology of the Rocade Sud. Map data \copyright2016 Open Street Maps}
	\end{subfigure} \quad
	\begin{subfigure}[b]{0.3\textwidth}
		\includegraphics[width=\textwidth]{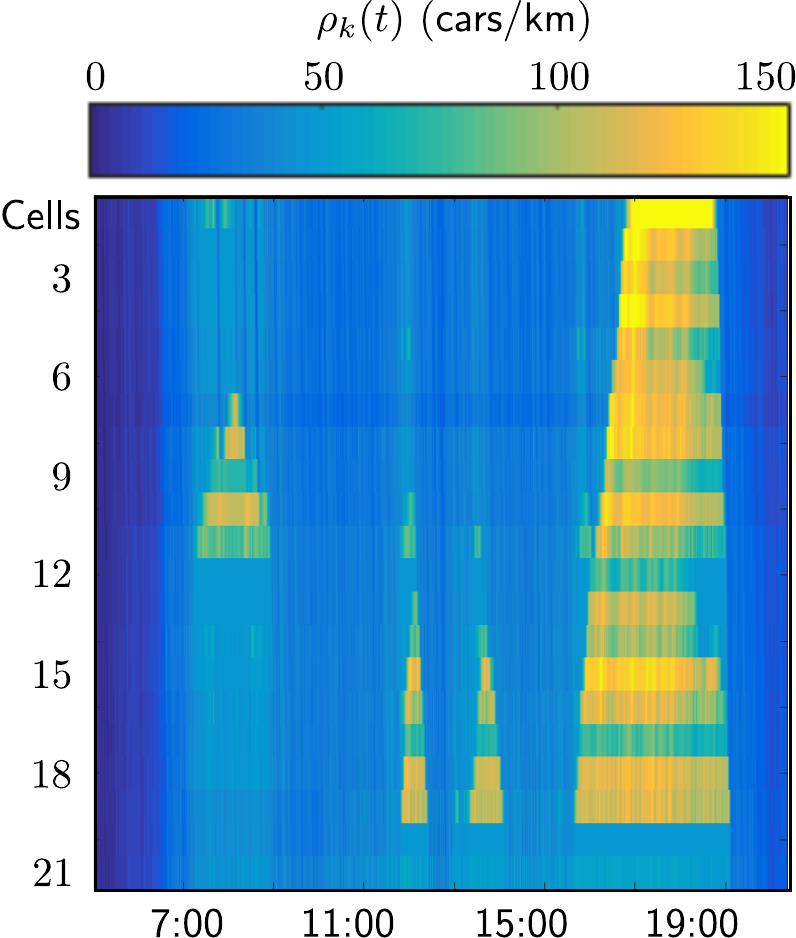}
		\caption{Density without ramp metering}
	\end{subfigure} \quad
	\begin{subfigure}[c]{0.3\textwidth}
	\vspace{-6.25cm}
		\includegraphics[width=\textwidth]{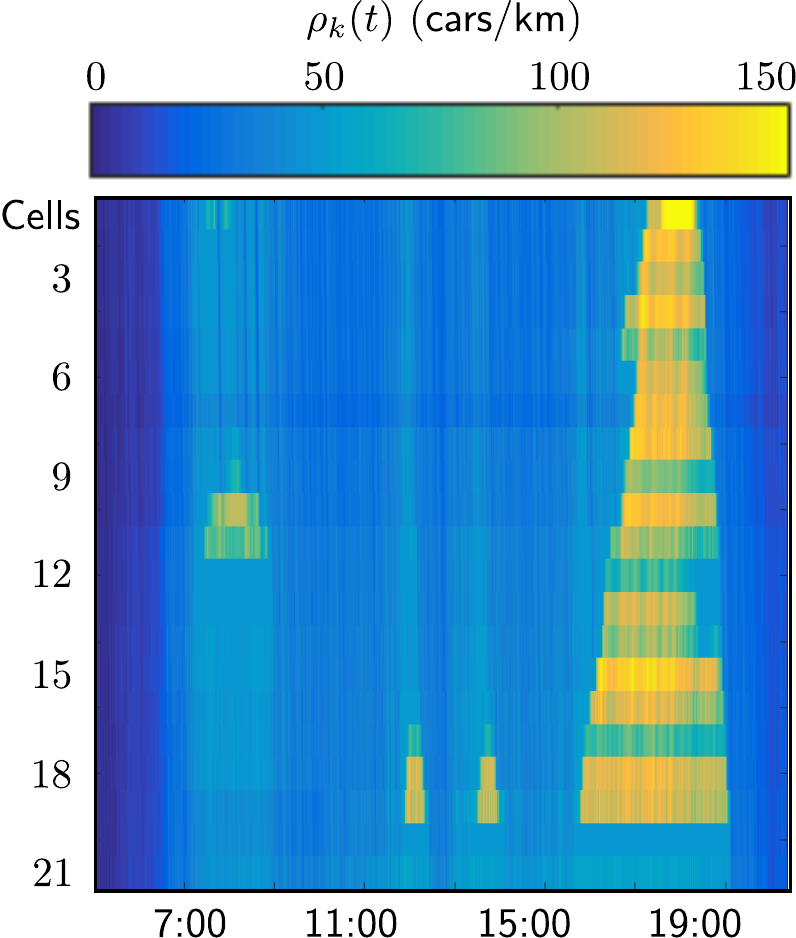}
		\caption{Density with optimal control}
	\end{subfigure} 
	\caption{An overview of the topology of the considered freeway, the Rocade Sud in Grenoble, is depicted on the left. In the middle and on the right, partial simulation results traffic demands of April, 14th, 2014 are shown. Depicted are the mainline densities simulated on the monotonic freeway without any disturbances. The improvement achieved via ramp metering is clearly visible, but even in case of perfect model knowledge and perfect demand prediction it is not possible to prevent congestion altogether.}
	\label{fig:topology}
\end{figure}

We consider a congestion-prone freeway in the vicinity of Grenoble, France \cite{de2015grenoble}, as depicted in Figure \ref{fig:topology}. The total length of this freeway stretch, the so-called Rocade Sud, is $11.8$km. The freeway has $7$ offramps and $10$ onramps in total, some of which are planned to be used for ramp metering in the future. The freeway has been equipped with loop detectors and traffic data are reported to a control center every $15$ seconds. In this case study, the freeway is modeled by the CTM, with one cell per mainline sensor location. A triangular fundamental diagram with parameters as reported in \ref{sec:parameters} is chosen. Since ramp metering has not yet been installed, we can obtain the traffic demand profiles from the onramp- and mainline inflows to the system. For the following case study, we consider data from 5 weeks in March, April and June 2014, corresponding to 35 days in total\footnote{Data for the full three months are available, however, data from many days are incomplete due to sensor failures. To ensure representative performance evaluation, only weeks for which data are complete for all sensors at all days of the respective week are chosen for the case-study. }. We are interested in two main questions:
\begin{itemize}
\item How does TTS for best-effort control compare to the optimal TTS, in a theoretical best-case scenario with perfect model knowledge and traffic demand prediction?
\item Can we use best-effort control respectively the optimal solution to gain insight into the performance achieved by practical ramp metering controllers (Alinea) in a realistic scenario, that is, in the presence of model and demand uncertainty?
\end{itemize}
To answer the first question, we will study simulations of the nominal model in the following section, before considering the effects of model uncertainty and random disturbances of external demands in Section \ref{sec:empirical}.

\subsection{Nominal case}  
\label{sec:nominal}

\begin{figure}[t] 
	\centering
		\includegraphics[width=0.99\textwidth]{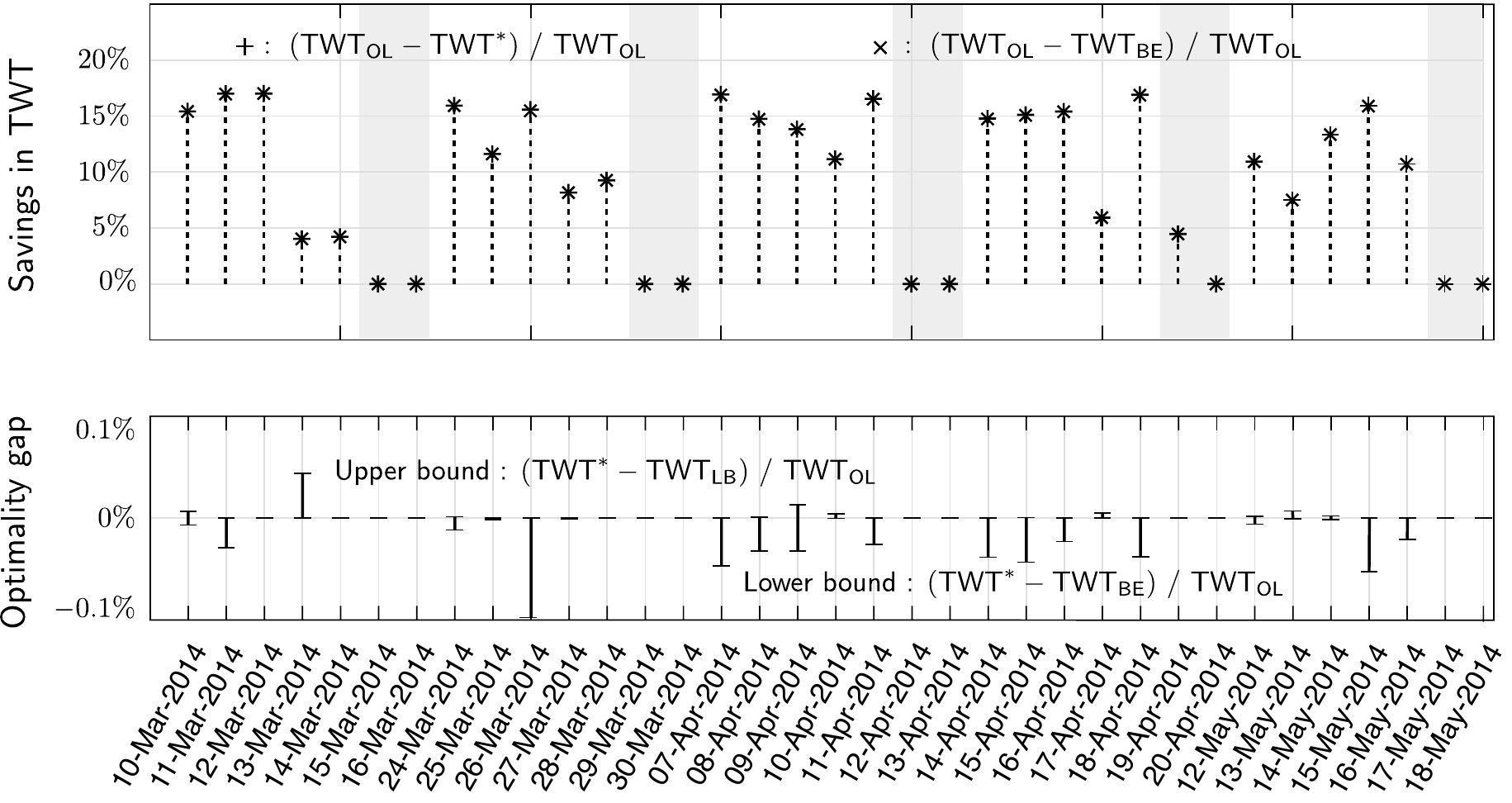}
	\caption{The savings in TWT for best-effort control are almost identical to the optimal savings, as it can be seen in the upper plot. Note that on weekends, highlighted in grey, ramp metering does provide little or no benefits. In the second plot, we chose a much smaller scale to visualize the suboptimality of best-effort control. The optimal savings can be lower bounded by the best-effort solution and upper bounded by the relaxed best-effort solution $\TWT_{\text{LB}}$, computed as described in the previous section. It can be seen that for the chosen scenarios, the suboptimality of best-effort control is less than $0.1\%$ (in savings of TWT).}
	\label{fig:case_study_hist}
\end{figure}

In this section, we aim to compare the performance of best-effort control, that is, the Total Time Spent $\TTS_{\text{BE}}$ to the optimal solution $\TTS^*$ and the Total Time Spent in open loop $\TTS_{\text{OL}}$, that is, without any ramp metering . The optimal solution can be obtained by solving a Linear Program as detailed in \cite{gomes2006optimal}, but it is worth highlighting that its computation requires perfect traffic demand prediction. Furthermore, both best-effort control and the computation of the optimal solution require a traffic model. For now, we assume that the freeway dynamics are given by the nominal model as described before and that this model is known to the ramp metering controllers. The aim is to verify the theoretical results and the assumption of perfect model knowledge will be relaxed in Section \ref{sec:empirical}.

We simulate the traffic evolution using the nominal traffic model and the recorded external traffic demand patterns of five weeks in total. Congestion patterns for a day with large, but not atypical external traffic demands (14.04.2014) are depicted in Figure \ref{fig:topology}. Both the evolution of the uncontrolled freeway and the optimal solution are shown, in order to provide some intuition on the typical benefits achievable by ramp metering. A natural metric to quantify the benefits of ramp metering is to relate the savings in TTS to TTS itself.  On average (over all days), we obtain savings of
\begin{equation*}
\frac{\TTS_{\text{OL}} - \TTS^*}{\TTS_{\text{OL}}} \approx 3.09\% .
\end{equation*}
A large proportion of total TTS is caused by vehicles traveling at free-flow velocity, for which ramp metering does not provide any benefits. We are mainly interested in comparing the total time wasted in congestion and on onramp queues, however, which we call the \emph{Total Waiting Time} (TWT). Hence, we first define the Total Free-flow Time (TFT) as the TTS achieved on a hypothetical freeway in which all cars instantly enter the mainline after arriving on an onramp and always travel at free-flow speed on the freeway itself. Then, the total waiting time can be computed as $\TWT^* = \TTS^* - \TFT$ (and similarly for $\TWT_{\text{OL}}$) and the relative savings in terms of time wasted in congestion and on onramp queues, over all days, amount to
\begin{equation*}
\frac{\TWT_{\text{OL}} - \TWT^*}{\TWT_{\text{OL}}} \approx 14.6\% .
\end{equation*}
We note that these numbers are similar to the ones reported in \cite{gomes2006optimal,muralidharan2012optimal}. 

The complete results for all individual days are summarized in Figure \ref{fig:case_study_hist}. It can immediately be seen that ramp metering indeed reduces TTS on most days. In the monotonic CTM, this is achieved by a reduction in the spill back of mainline congestion, which in turn increases the outflows from the offramps. On certain days, no improvement can be achieved by ramp metering. These days are characterized by low traffic demand, which is for example typical for weekends. From real traffic data, one can verify that even the uncontrolled freeway usually does not become congested on these days, so obviously no ramp metering is the best policy. It is noteworthy that the performance achieved by the best-effort controller is indistinguishable from the optimal solution in a plot scaled according to the absolute values of the savings. A closer look reveals that the worst performance deterioration of the best-effort controller amounts to
\begin{align*}
\max_{d \in \text{``days"}} \left\{ \frac{\TWT^*(d) - \TWT_{\text{BE}}(d)}{\TWT_{\text{OL}}(d)} \right\} \approx -0.1 \%
\end{align*}
in terms of the time wasted in congestion (March 26th). In comparison to the savings achieved by best-effort control, we consider this optimality gap to be negligible. As further evidence of the value of our results, we note the observation of \cite{muralidharan2012optimal} who show that increasing the length of the control horizon in MPC for ramp metering does not seem to substantially improve performance\footnote{According to \cite[Table II]{muralidharan2012optimal}, extending the control horizon from $5$min to $25$min leads only to a marginal improvement in terms of savings of TWT from $17.39\%$ to $17.43\%$.}. This at first sight counterintuitive observation can be interpreted through our results, which suggest that the BE controller, which is equivalent to MPC with a horizon of a single time step, performs close to optimal for ramp metering of a freeway stretch modeled by the monotonic CTM.

Nevertheless, the existence of a (small) optimality gap for the best-effort controller suggests that at least some cells become restrictive according to Definition \ref{def:local_optimality} at one point during most days. Analysis of the simulation results reveals that most cells are nonrestrictive for most of the time. More precisely, the day on which restrictive cells occur most often  is April 18th, on which any cell corresponding to a metered onramp is restrictive on average in $0.39\%$ of all sampling time instances. 
March 12th, March 14th and March 28th are noteworthy special cases, since all cells are nonrestrictive all the time in simulations for these days, despite the occurrence of congestion. Therefore, these days provide an opportunity to verify the results of Theorem \ref{theorem:main}. Indeed, we can verify in Figure \ref{fig:case_study_hist} that best-effort control achieves exactly the optimal performance on these days.

Theorem \ref{theorem:main} provides only sufficient (but not necessary) conditions for optimality. Also, it does not make any a-priori statement about performance if some cells become restrictive, as it happens in the empirical evaluation for most days. The simulation results suggest, however, that infrequent violations of the optimality conditions lead to solutions that are close to optimal. Therefore, it seems worthwhile to analyze when cells become restrictive. Analysis of the results reveals that of all cells, cell $7$ becomes restrictive the most often by far. This effect can partially be explained by the freeway topology in the vicinity of cell $7$. As depicted in Figure \ref{fig:cell7_map}, the traffic demand at the onramp at cell $7$ originates from the same roundabout as the traffic demand at the next downstream onramp (cell $8$). Drivers seem to prefer the downstream onramp and therefore, the traffic demand at the onramp corresponding to cell $7$ is low in comparison to other onramps on the freeway. Thus in case of ramp metering, it takes longer to fill up the onramp queue and the storage space on the respective onramp is not used to its best potential during short time intervals. During these intervals, cell $7$ becomes restrictive as depicted in Figure \ref{fig:cell7}. This effect is the main cause for the (still negligible) suboptimality of best-effort control in this simulation study.

\begin{figure}[t] 
	\centering
	\begin{subfigure}[a]{0.2\textwidth}
		\centering
		\includegraphics[width=\textwidth]{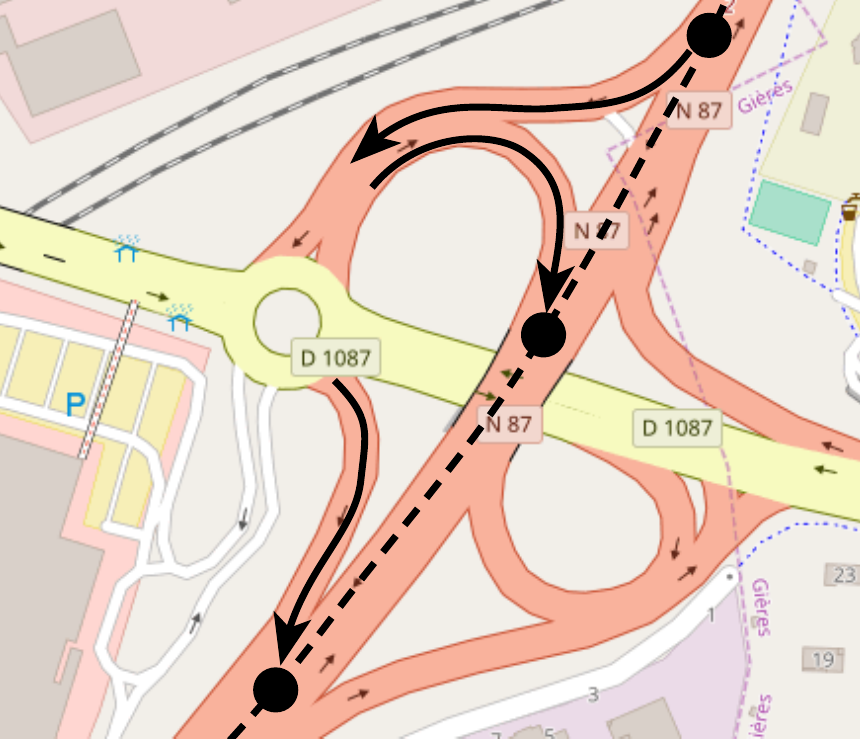}
		\caption{Map of cells $7$ and $8$ and the associated onramps of the \emph{Rocade Sud}, Grenoble. Map data \copyright2017 Open Street Maps.}
		\label{fig:cell7_map}
	\end{subfigure} ~
	\begin{subfigure}[a]{0.37\textwidth}
		\centering
		\includegraphics[width=\textwidth]{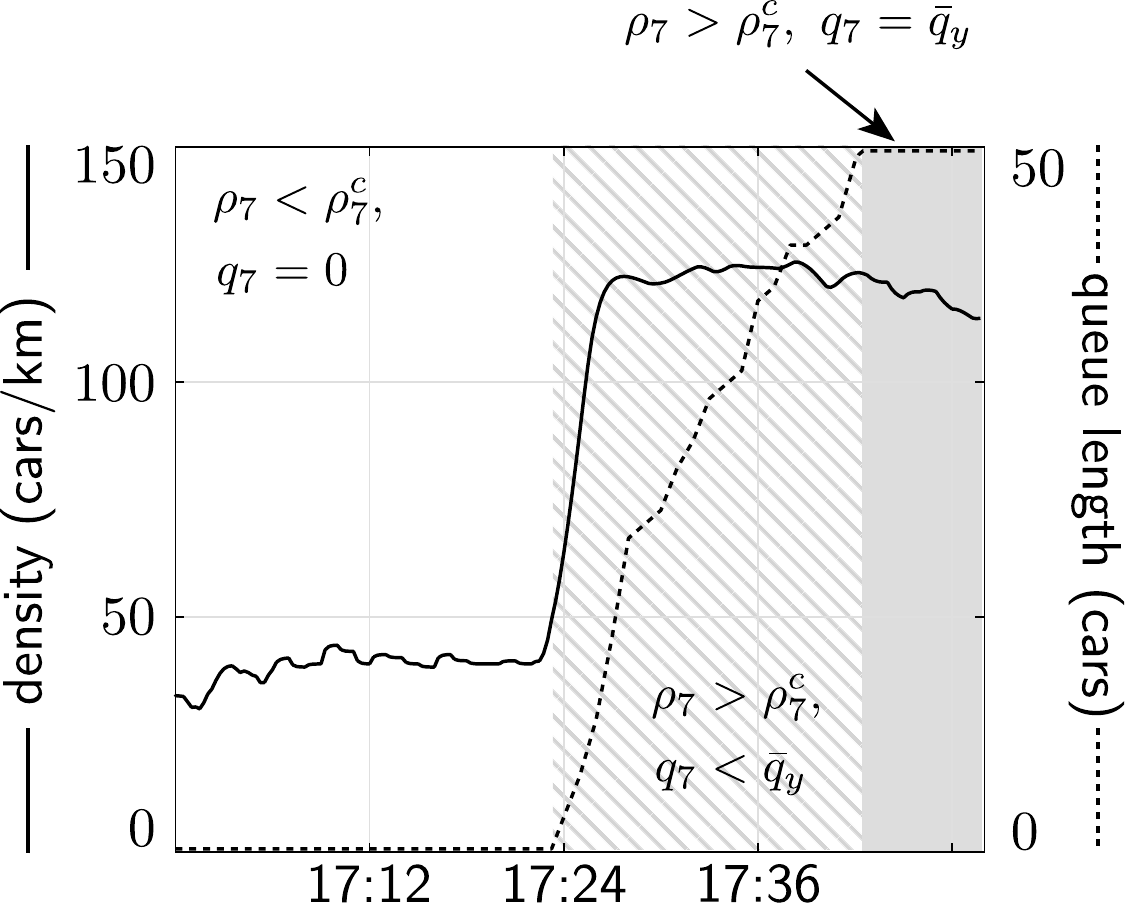}
		\caption{Density evolution in cell $7$ at the onset of evening congestion. During a $18$min period, cell $7$ is restrictive since it is congested while the associated onramp is not full.}
		\label{fig:cell7}
	\end{subfigure} ~	
	\begin{subfigure}[a]{0.37\textwidth}
		\centering
		\includegraphics[width=\textwidth]{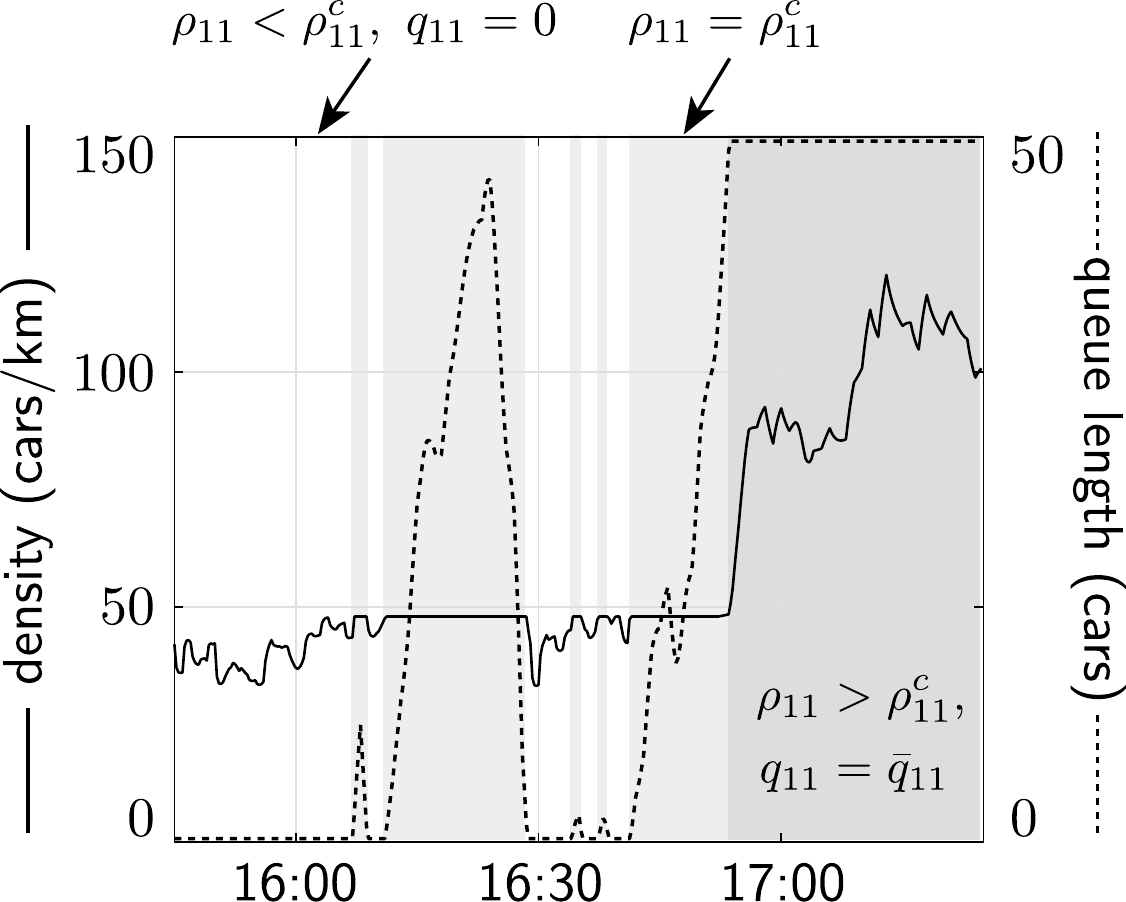}
		\caption{Density evolution in cell $11$ at the onset of evening congestion. This cell is nonrestrictive during the whole period.}
		\label{fig:cell11}
	\end{subfigure} ~
	\caption{Partial simulation results of the \emph{Rocade Sud}, using the traffic demand of April 14th, 2014. Cell $7$ is atypical, since traffic demands at the associated onramp are very low because the onramp is connected to the same roundabout as onramp $8$. Therefore, it typically takes a long time to fill the ramp which facilitates the corresponding cell becoming restrictive.}
\end{figure}

\begin{figure}[tp] 
	\centering
		\includegraphics[width=0.7\textwidth]{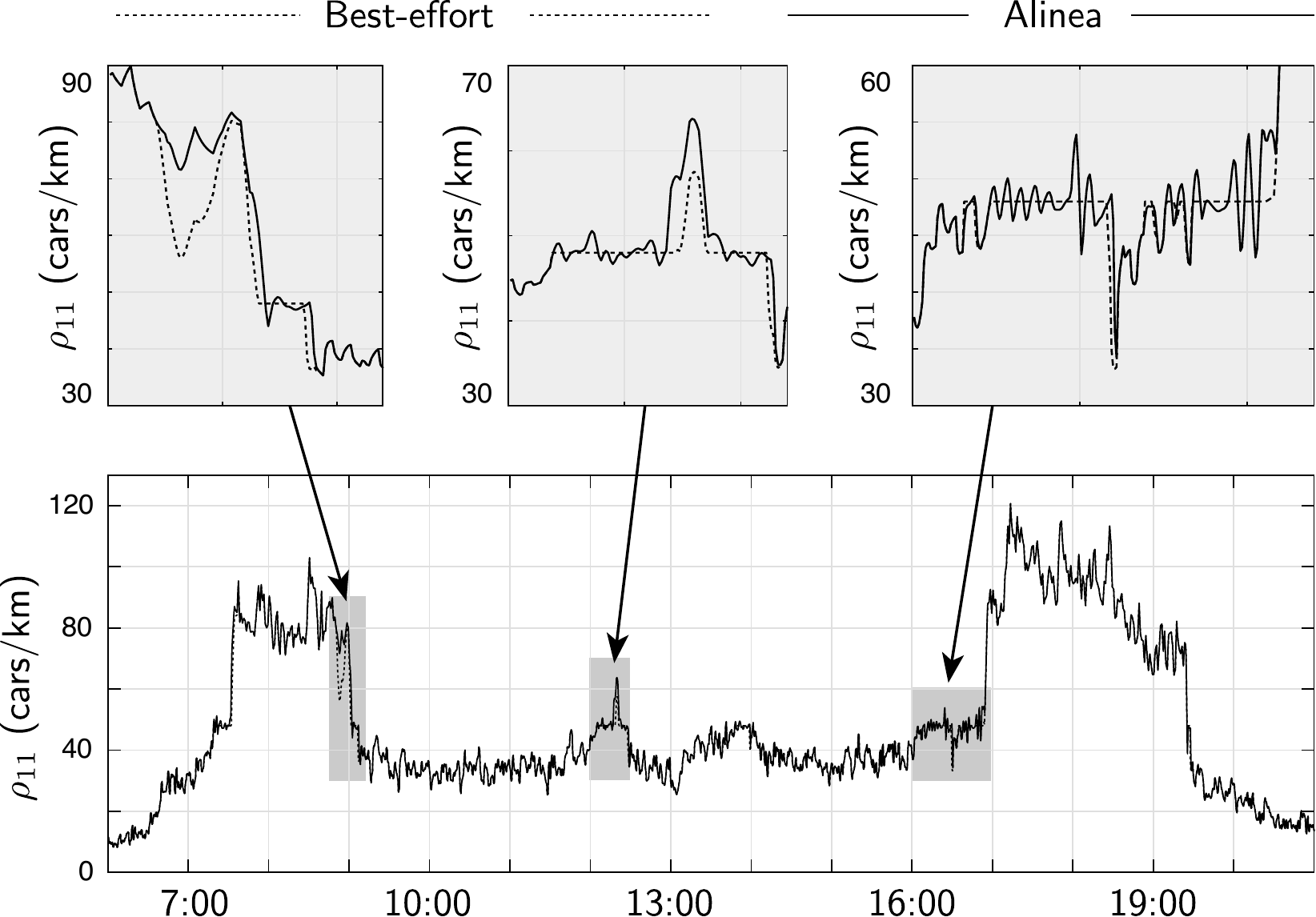}
	\caption{Comparison between the closed-loop trajectories of best-effort control and ALINEA for cell $20$ on April 14th, 2014.}
	\label{fig:alinea_detail}
\end{figure} 

To demonstrate that the theoretical conclusions drawn from the analysis of the best-effort controller indeed extend to practical ramp metering policies, we perform a comparison to ALINEA \cite{papageorgiou1991alinea}, a popular, distributed ramp metering policy. ALINEA in its basic form consists of local, anti-windup integral feedback controllers. The metering rates are first computed as integral feedback
\begin{equation*}
\tilde r_k(t) := r_k(t-1) + K_I \cdot (\rho^c_k - \rho_k(t))
\end{equation*}
Then, they are saturated
\begin{align}
r_k(t) = \left[ \tilde r_k(t) \right]_{\max \left\{ 0, \frac{1}{\Delta t} \left( q_k(t) - \bar q_k \right) + d_k(t) \right\} }^{\min \left\{ \bar r_k, \frac{1}{\Delta t} q_k(t) + d_k(t) \right\} }
\end{align}
 in the same way as for the BE controller. There exists a variety of extensions to this basic controller \cite{papamichail2008traffic,smaragdis2004flow,wang2013local}, that introduce coordination between ramps or permit use of different sensor configurations. The standard ALINEA controller requires only the critical density as a model parameter. Instead of using model knowledge to estimate and predict split ratios and traffic demand and supply, this controller relies on integral feedback. Closed loop equilibria of Alinea in the monotonic CTM are known to be flow optimal \cite{schmitt2015flow}.

Comparing the optimal solution $\TWT^*$ reps. $\TWT_{\text{BE}}$ to the Total Waiting Time achieved by Alinea $\TWT_{\text{AL}}$, we find that the performance deteriorates only slightly, with an average loss of
\begin{equation*}
\frac{\TWT_{\text{AL}} - \TWT^*}{\TWT_{\text{OL}}} = 0.45\%, \quad \frac{\TWT_{\text{AL}} - \TWT_{\text{BE}}}{\TWT_{\text{OL}}} = 0.42\%, 
\end{equation*}
in terms of savings in $\TWT$. Closer inspection reveals that Alinea does not only show comparable performance to the best-effort controller (and hence, the optimal solution), but also very similar trajectories, as depicted in Figure \ref{fig:alinea_detail}. This figure shows in detail the density evolution in cell $20$ in simulations for both best-effort control and Alinea, using the external traffic demands of April 14th, 2014. Cell $20$ is a major bottleneck and the differences between both controllers are most pronounced here. Nevertheless, deviations in the trajectories obtained by the respective controllers are small in comparison to the variations caused by time-varying external traffic demands. Differences only occur in time intervals during which the controllers do not saturate, i.e.,\ when the density is stabilized at (or close to) the critical density. Changes in upstream- and downstream mainline flows act as persistent disturbances and the lack of perfect model knowledge in Alinea means that the density will rarely, if ever, converge exactly to the critical density. By contrast, the best-effort controller is assumed to have perfect model knowledge and keeps the density exactly at the critical density if the metering bounds permit to do so. In this sense, one should view the slight performance deterioration of ALINEA in comparison to best-effort control as the price one has to pay for not exactly knowing the fundamental diagram in reality. Nevertheless, the similarity between the trajectories suggests that (ideal) best-effort control is a suitable proxy to gain insight into the performance of Alinea.

\subsection{Model Uncertainty}  
\label{sec:empirical}

We now drop the assumption of perfect model knowledge in order to test the influence of model uncertainty on the controller performance. In addition to the nominal freeway model as introduced in the preceding section, we also consider 
\begin{itemize}
 	\item \textbf{(Model uncertainty)} the effect of a controller model with a fundamental diagram different from the nominal model used for simulations,
	\item \textbf{(Disturbances)} the effect of random, uncorrelated disturbances acting on the flows during simulation and 
	\item \textbf{(Capacity drop}) the effects of a non-monotonic demand function which models a capacity drop in the congested region.
\end{itemize}
Specifically, we sample (random) controller models for the fundamental diagram as follows: the controller model uses free-flow speeds $\hat v_k$ drawn from a uniform distribution centered around the nominal free-flow speeds $\hat v_k \sim \mathcal{U}(v_k - \Delta v_k, v_k + \Delta v_k))$. Similarly, the controller assumes a traffic jam density $\hat {\bar \rho}_k$ randomly chosen from the uniform distribution $\hat {\bar \rho}_k \sim \mathcal{U}(\bar\rho_k - \Delta \bar\rho_k, \bar\rho_k + \Delta \bar\rho_k))$. The resulting range of model uncertainty in the fundamental diagram is depicted in Figure \ref{fig:uncertain_fd}. We do \emph{not} perturb the critical density $\rho_k^c$, as the fact that the ``critical occupancy [proportional to the critical density considered here] seems to be less sensitive with respect to weather conditions and other operational influences compared with the capacity [$F_k$] of a freeway stretch" \cite{papageorgiou1991alinea} is the main reason it is preferably used as a reference by feedback control in ramp metering. Uncertainty in the fundamental diagram is thus modeled in a static manner, but traffic flows might also be affected by random disturbances. Therefore, we assume that the flows $\hat \phi_k$ are subject to time-varying, normally distributed disturbances $\hat \phi_k(t) \sim \phi_k(t) \cdot \mathcal{N}(1, \sigma_\phi)$. Finally, recall that the theoretical analysis in this work applies to the monotonic CTM. We will also extend the empirical analysis to a non-monotonic variation\footnote{Note that in case of the CTM, non-monotonicity of the demand function implies non-monotonicity of the dynamic system in the sense of e.g.\ \cite{hirsch2005monotone2}.} of the fundamental diagram as depicted in Figure \ref{fig:nonmonotone_fd} \cite{kontorinaki2016capacity}.

\begin{figure}[t] 
	\centering
	\begin{subfigure}[a]{0.3\textwidth}
	\vspace{-6.2cm}
		\includegraphics[width=\textwidth]{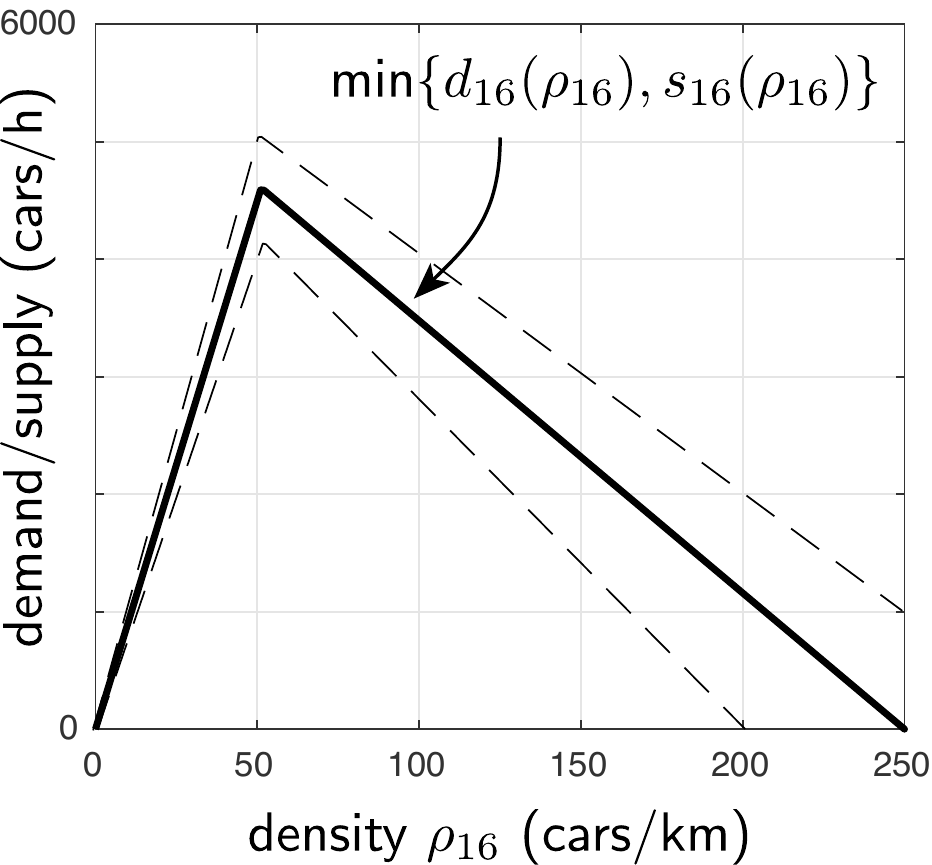}
		\caption{Maximal uncertainty range for $\Delta v = 10\%$ and $\Delta \bar\rho = 20 \%$.}
		\label{fig:uncertain_fd}
	\end{subfigure} ~
	\begin{subfigure}[b]{0.3\textwidth}
		\includegraphics[width=\textwidth]{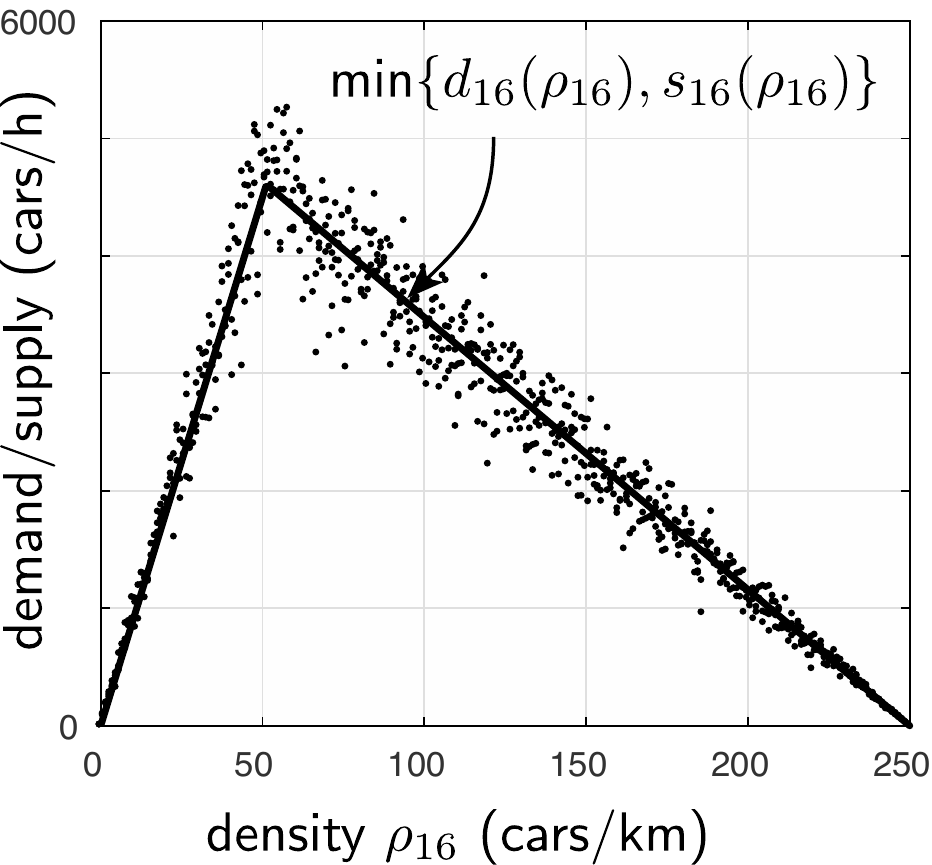}
		\caption{Nominal fundamental diagram in comparison to random flows, for $\sigma = 5\%$. }
		\label{fig:disturbed_fd}
	\end{subfigure} ~
	\begin{subfigure}[c]{0.3\textwidth}
	\vspace{-6.2cm}
		\includegraphics[width=\textwidth]{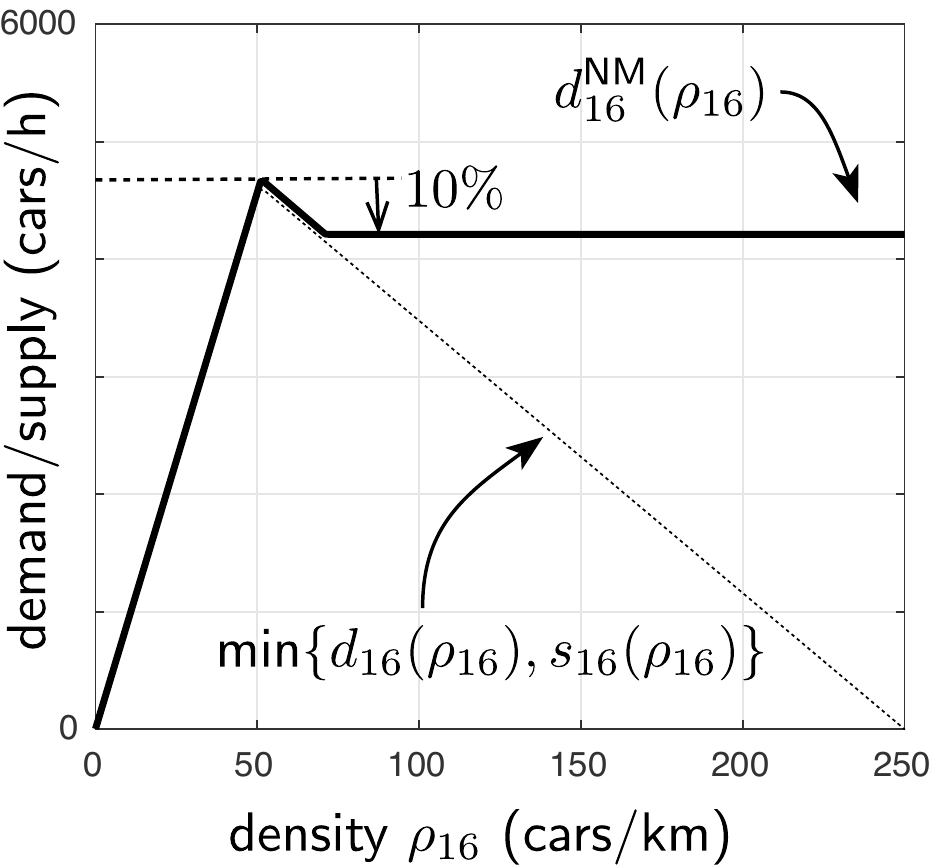}
		\caption{Capacity drop in the demand function in the congested region.}
		\label{fig:nonmonotone_fd}
	\end{subfigure} 
	\caption[TEST]{For cell $16$, the maximal range of uncertainty for the controller model is depicted on the left. In the middle, realizations of the random flows are compared to the nominal fundamental diagram\protect\footnotemark. As depicted on the right, the non-monotonic FD includes a capacity drop of $10\%$ in the demand function.}
\end{figure}
\footnotetext{For illustration purposes, we perturb the local fundamental diagram $\min \{ d_{16}(\rho_{16}), s_{16}(\rho_{16}) \}$. Note that in simulations, the perturbations are applied as $\hat \phi_{16}(t) = \min \{ d_{16}(\rho_{16}(t)), s_{17}(\rho_{17}(t)) \} \cdot \mathcal{N}(1,\sigma_\phi)$.}

\begin{table} 
\centering
\caption{Average improvement in TWT for ramp metering (over the $35$ considered days) in the simulation study. For values depending on stochastic disturbances and model uncertainty, the mean values over $100$ independent simulations are reported. The values from the nominal analysis in the previous chapter are highlighted in \textbf{bold} and reported with an additional digit of accuracy, to highlight that they differ slightly from each other.}
\begin{tabular}{rllcccc}
\toprule 
 \multicolumn{3}{c}{} & \multicolumn{4}{c}{Improvement in TWT} \\ [1ex]
 \cmidrule(lr){4-7}
 & & & \multicolumn{2}{c}{Monotonic CTM} & \multicolumn{2}{c}{Non-monotonic CTM} \\ [1ex]
 \cmidrule(lr){4-5} \cmidrule(lr){6-7}
 & & & $\sigma_\phi = 0\%$ & $\sigma_\phi = 5\%$ & $\sigma_\phi = 0\%$ & $\sigma_\phi = 5\%$  \\ [1ex]
 \cmidrule(lr){4-4} \cmidrule(lr){5-5} \cmidrule(lr){6-6} \cmidrule(lr){7-7}
 & \multicolumn{2}{r}{Optimal LP solution} & $\bf 14.62\%$ & - & - & - \\ [1.5ex]
\multirow{ 4}{*}{\begin{sideways}Best-effort\end{sideways}}
 & $\Delta v = 0\%$,& $\Delta \bar\rho = 0\%$ & $\bf 14.59\%$ & $13.7\%$ & $28.3\%$ & $18.5\%$ \\ [0.7ex]
 & $\Delta v = 2.5\%$,& $\Delta \bar\rho = 5\%$ & $14.2\%$ & $13.4\%$ & $25.8\%$ & $17.9\%$ \\ [0.7ex]
 & $\Delta v = 5\%$,& $\Delta \bar\rho = 10\%$ & $13.7\%$ & $12.9\%$ & $23.4\%$ & $17.3\%$ \\ [0.7ex]
 & $\Delta v = 10\%$,& $\Delta \bar\rho = 20\%$ & $12.4\%$ & $12.2\%$ & $19.6\%$ & $15.8\%$ \\ [1.5ex]
 & \multicolumn{2}{r}{Alinea} & $14.2\%$ & $12.6\%$ & $25.0\%$ & $16.6\%$ \\ [1.5ex]
\bottomrule
\end{tabular}
\label{tab:performance}
\end{table} 

We simulate the best-effort controller and Alinea for different combinations of $\Delta v \in \{0\%, 2.5\%, 5\%, 10\%\}$, $\Delta \bar\rho \in \{0\%, 5\%, 10\%, 20\%\}$, $\sigma_{\phi} \in \{0\%, 5\%\}$ and the CTM either monotonic or with a capacity drop of $10\%$. Note that Alinea is not affected by model uncertainty in either $v$ or $\bar\rho$. We also simulate the freeway in open loop, that is, without ramp metering, and compute the corresponding $\TFT$ in order to compute the savings in TWT. The results are reported in Table \ref{tab:performance}. The case of the monotonic CTM with $\Delta v = \Delta \bar\rho = \sigma = 0\%$ corresponds to the nominal traffic model discussed in detail in the previous section. The optimal solution is only known for this special case. 

Unsurprisingly, the addition of random noise ($\sigma_\phi = 5\%$) decreases the effectiveness of ramp metering. Similarly, increasing model uncertainty deteriorates the performance of best-effort control. It might be less obvious why greater savings in TWT are achieved in cases when the non-monotonic CTM is used for simulations, even though the best-effort controller always uses a monotonic CTM for its one-step-ahead flow predictions. The reason is that in a non-monotonic model, the prevention of congestion via ramp metering provides the additional benefit of increased bottleneck flow, in addition to the increased offramp outflow achieved via prevention of congestion spill back, which is present in both monotonic and non-monotonic models.

Note that the performance achieved by Alinea is bounded by the performance of nominal best-effort control ($\Delta v = \Delta \bar\rho = 0\%$) and best-effort control with substantial, but not unrealistic model uncertainty ($\Delta v = 10\%$, $\Delta \bar\rho = 20\%$) in this simulation study. This further supports the hypothesis that best-effort control is a suitable proxy for studying the performance of Alinea. In case of a monotonic freeway model, this answers the question of \emph{why} Alinea is ``close to the most-efficient logic" \cite{zhang2004optimal}: It can be seen as a practical approximation to the best-effort controller, which by itself is impractical in the absence of accurate model knowledge. The suboptimality of Alinea in the monotonic case is therefore mainly caused by the lack of perfect model knowledge and \emph{not} a consequence of the lack of long-term demand prediction or centralized coordination.

It should be highlighted that this conclusion only holds true for the monotonic case. Even though it can be seen that Alinea approximates best-effort control in a similar manner for the non-monotonic CTM, the best-effort solution is not necessarily close to the optimal solution. In fact, it is not even clear if the optimal solution in the non-monotonic case can be computed efficiently, even in the idealized case with perfect model knowledge, since the resulting optimization problems are non-convex. If useful theoretical bounds on optimal performance in the non-monotonic setting were to become available however, best-effort control might be useful in analyzing the suboptimality of distributed, non-predictive control in a similar manner as in our analysis for the monotonic case.

\section{Conclusions} \label{sec:conclusions} 

In this work, we have derived sufficient optimality conditions for minimal TTS ramp metering, for a freeway modeled by the monotonic CTM. To keep the theoretical analysis tractable, we considered an idealized setting with perfect model knowledge. The results imply that as long as the cells are nonrestrictive, the distributed, non-predictive best-effort controller is optimal and in the monotonic CTM, no additional benefit can be realized either by coordination between ramps or long-term prediction of the external traffic demands. The fact that space on the onramps is limited according to constraint \eqref{eq:rampbounds2} does not make coordination between ramps desirable in the monotonic setting, though it does make ramp metering less effective, of course. Yet, Theorem \ref{theorem:main} does not make any a-priori statement about whether cells will remain nonrestrictive or, in case they do not, about the optimality gap of best-effort control in comparison to the optimal solution. A partial remedy can be achieved by employing the relaxed BE controller to compute a lower bound on TTS. Even though no a-priori performance bound for \emph{all} external demand patterns is available, the methods presented can be used to compute a-posteriori performance bounds for any \emph{fixed} external demand pattern efficiently, that is, by two forward simulations using BE and relaxed BE control respectively, without the need to solve an optimization problem. In addition, extensive evaluations suggest that the optimality gaps tend to be negligibly small. We further demonstrated that the best-effort controller is a suitable proxy for Alinea, which uses feedback to mitigate the lack of exact model knowledge. These considerations provide a theoretical explanation for part of the conclusions drawn from practical experience and heuristic considerations about performance of distributed ramp metering in comparison to the optimal soluton.

However, they seem to contradict other practical experience reported in \cite{papamichail2010coordinated}, which that suggests that``limited ramp storage space and the requirement of equity \dots are the main reasons for coordinated ramp metering". We can resolve this seeming contradiction by recalling that the optimality results apply to the monotonic CTM. In practice, there is empirical evidence of a capacity drop at a congested bottleneck. 
A congestion in such a model will reduce the bottleneck flow and subsequently decrease densities further downstream, which implies that the system dynamics are no longer monotonic. In a non-monotonic setting, an incentive exists to prevent congestion of a bottleneck, even if there is no danger of spill back of the congestion queue. An important conclusion about the potential benefits of coordinated ramp metering can be drawn from this analysis: coordinated ramp metering may target inefficiencies that result from limited space on the onramps \emph{in conjunction with the non-monotonic behavior of a congested bottleneck}. Any model-based, coordinated ramp metering policy should therefore either employ a model that is able to reproduce this effect, in order to recognize and avoid it, or include heuristic modifications to the controller targeting the unmodelled effect, such as heuristic ramp coordination rules. By contrast, model-predictive control based on the monotonic CTM with a maximize-TDT or minimize-TTS objective is unlikely to provide any substantial improvement over best-effort control (in the ideal case with perfect model knowledge) or Alinea (in a realistic setting), as there is no inherent incentive (for the controller based on the monotonic model) to coordinate ramps to avoid downstream congestion before spill-back occurs.

Based on these results, two main future research directions seem promising: First, an a-priori quantification of the suboptimality incurred because of infrequent violation of the optimality conditions or because of uncertainty in the knowledge of the fundamental diagram would be helpful. Theoretical bounds on the suboptimality in the presence of a general, not necessarily monotonic CTM would be very helpful, but since no convex reformulation or other efficient, global solution methods for optimal control problems involving the non-monotonic CTM exist, it is questionable, whether tight bounds can be derived. Instead, the known results on the impact of non-monotonic behavior can now be used to design control laws for non-monotonic, realistic freeway models in a more systematic manner by starting from the optimal solution for the monotonic case and specifically targeting the non-monotonicities with modifications to this baseline control policy.

\section*{Acknowledgements}
We are grateful to the Grenoble Traffic Lab at INRIA, Grenoble, France for providing the traffic data used for evaluation.


\FloatBarrier
\bibliography{trb_bibliography}

\begin{thebibliography}{10}

\bibitem{burger2013considerations}
M.~Burger, M.~Van Den~Berg, A.~Hegyi, B.~De~Schutter, and J.~Hellendoorn.
\newblock Considerations for model-based traffic control.
\newblock {\em Transportation Research Part C: Emerging Technologies},
  35:1--19, 2013.

\bibitem{como2015throughput}
Giacomo Como, Enrico Lovisari, and Ketan Savla.
\newblock Throughput optimality and overload behavior of dynamical flow
  networks under monotone distributed routing.
\newblock {\em IEEE Transactions on Control of Network Systems,}, 2(1):57--67,
  2015.

\bibitem{como2016convexity}
Giacomo Como, Enrico Lovisari, and Ketan Savla.
\newblock Convexity and robustness of dynamic traffic assignment and freeway
  network control.
\newblock {\em Transportation Research Part B: Methodological}, 91:446--465,
  2016.

\bibitem{como2013robust2}
Giacomo Como, Ketan Savla, Daron Acemoglu, Munther~A Dahleh, and Emilio
  Frazzoli.
\newblock Robust distributed routing in dynamical networks---part i: Locally
  responsive policies and weak resilience.
\newblock {\em IEEE Transactions on Automatic Control}, 58(2):317--332, 2013.

\bibitem{como2013robust1}
Giacomo Como, Ketan Savla, Daron Acemoglu, Munther~A Dahleh, and Emilio
  Frazzoli.
\newblock Robust distributed routing in dynamical networks--part ii: Strong
  resilience, equilibrium selection and cascaded failures.
\newblock {\em IEEE Transactions on Automatic Control}, 58(2):333--348, 2013.

\bibitem{coogan2015compartmental}
Samuel Coogan and Murat Arcak.
\newblock A compartmental model for traffic networks and its dynamical
  behavior.
\newblock {\em IEEE Transactions on Automatic Control}, 60(10):2698--2703,
  2015.

\bibitem{coogan2016stability}
Samuel Coogan and Murat Arcak.
\newblock Stability of traffic flow networks with a polytree topology.
\newblock {\em Automatica}, 66:246--253, 2016.

\bibitem{daganzo1994cell}
Carlos~F. Daganzo.
\newblock The cell transmission model: A dynamic representation of highway
  traffic consistent with the hydrodynamic theory.
\newblock {\em Transportation Research Part B: Methodological}, 28(4):269--287,
  1994.

\bibitem{daganzo1995cell}
Carlos~F. Daganzo.
\newblock The cell transmission model, part ii: network traffic.
\newblock {\em Transportation Research Part B: Methodological}, 29(2):79--93,
  1995.

\bibitem{de2015grenoble}
Carlos~Canudas de~Wit, Fabio Morbidi, Luis~Leon Ojeda, Alain~Y. Kibangou, Iker
  Bellicot, and Pascal Bellemain.
\newblock Grenoble traffic lab: An experimental platform for advanced traffic
  monitoring and forecasting.
\newblock {\em Control Systems, IEEE}, 35(3):23--39, 2015.

\bibitem{gomes2006optimal}
Gabriel Gomes and Roberto Horowitz.
\newblock Optimal freeway ramp metering using the asymmetric cell transmission
  model.
\newblock {\em Transportation Research Part C: Emerging Technologies},
  14(4):244--262, 2006.

\bibitem{gomes2008behavior}
Gabriel Gomes, Roberto Horowitz, Alex~A Kurzhanskiy, Pravin Varaiya, and
  Jaimyoung Kwon.
\newblock Behavior of the cell transmission model and effectiveness of ramp
  metering.
\newblock {\em Transportation Research Part C: Emerging Technologies},
  16(4):485--513, 2008.

\bibitem{hirsch2005monotone2}
MW~Hirsch, Hal Smith, et~al.
\newblock Monotone dynamical systems.
\newblock {\em Handbook of differential equations: ordinary differential
  equations}, 2:239--357, 2005.

\bibitem{karafyllis2015global}
Iasson Karafyllis and Markos Papageorgiou.
\newblock Global exponential stability for discrete-time networks with
  applications to traffic networks.
\newblock {\em IEEE Transactions on Control of Network Systems}, 2(1):68--77,
  2015.

\bibitem{kontorinaki2016capacity}
Maria Kontorinaki, Anastasia Spiliopoulou, Claudio Roncoli, and Markos
  Papageorgiou.
\newblock Capacity drop in first-order traffic flow models: Overview and
  real-data validation.
\newblock In {\em Transportation Research Board 95th Annual Meeting}, number
  16-3541, 2016.

\bibitem{lighthill1955kinematic}
Michael~J. Lighthill and Gerald~Beresford Whitham.
\newblock On kinematic waves ii. a theory of traffic flow on long crowded
  roads.
\newblock {\em Proceedings of the Royal Society of London A: Mathematical,
  Physical and Engineering Sciences}, 229(1178):317--345, 1955.

\bibitem{lovisari2014stability2}
Enrico Lovisari, Giacomo Como, and Ketan Savla.
\newblock Stability of monotone dynamical flow networks.
\newblock In {\em 53rd Annual Conference on Decision and Control (CDC)}, pages
  2384--2389. IEEE, 2014.

\bibitem{muralidharan2012optimal}
Ajith Muralidharan and Roberto Horowitz.
\newblock Optimal control of freeway networks based on the link node cell
  transmission model.
\newblock In {\em American Control Conference (ACC)}, pages 5769--5774. IEEE,
  2012.

\bibitem{papageorgiou2003review}
Markos Papageorgiou, Christina Diakaki, Vaya Dinopoulou, Apostolos Kotsialos,
  and Yibing Wang.
\newblock Review of road traffic control strategies.
\newblock {\em Proceedings of the IEEE}, 91(12):2043--2067, 2003.

\bibitem{papageorgiou1991alinea}
Markos Papageorgiou, Habib Hadj-Salem, and Jean-Marc Blosseville.
\newblock Alinea: A local feedback control law for on-ramp metering.
\newblock {\em Transportation Research Record}, (1320):58--64, 1991.

\bibitem{papageorgiou2000freeway}
Markos Papageorgiou and Apostolos Kotsialos.
\newblock Freeway ramp metering: An overview.
\newblock In {\em IEEE Proceedings on Intelligent Transportation Systems},
  pages 228--239, 2000.

\bibitem{papamichail2010coordinated}
Ioannis Papamichail, Apostolos Kotsialos, Ioannis Margonis, and Markos
  Papageorgiou.
\newblock Coordinated ramp metering for freeway networks--a model-predictive
  hierarchical control approach.
\newblock {\em Transportation Research Part C: Emerging Technologies},
  18(3):311--331, 2010.

\bibitem{papamichail2008traffic}
Ioannis Papamichail and Markos Papageorgiou.
\newblock Traffic-responsive linked ramp-metering control.
\newblock {\em IEEE Transactions on Intelligent Transportation Systems},
  9(1):111--121, 2008.

\bibitem{papamichail2010heuristic}
Ioannis Papamichail, Markos Papageorgiou, Vincent Vong, and John Gaffney.
\newblock Heuristic ramp-metering coordination strategy implemented at monash
  freeway, australia.
\newblock {\em Transportation Research Record: Journal of the Transportation
  Research Board}, 2178(1):10--20, 2010.

\bibitem{richards1956shock}
Paul~I Richards.
\newblock Shock waves on the highway.
\newblock {\em Operations research}, 4(1):42--51, 1956.

\bibitem{schmitt2015flow}
Marius Schmitt, Paul Goulart, Angelos Georghiou, and John Lygeros.
\newblock Flow-maximizing equilibria of the cell transmission model.
\newblock In {\em European Control Conference (ECC)}, pages 2634--2639. IEEE,
  2015.

\bibitem{smaragdis2004flow}
Emmanouil Smaragdis, Markos Papageorgiou, and Elias Kosmatopoulos.
\newblock A flow-maximizing adaptive local ramp metering strategy.
\newblock {\em Transportation Research Part B: Methodological}, 38(3):251--270,
  2004.

\bibitem{stephanedes1994implementation}
Yorgos~J. Stephanedes.
\newblock Implementation of on-line zone control strategies for optimal ramp
  metering in the minneapolis ring road.
\newblock In {\em Seventh International Conference on Road Traffic Monitoring
  and Control}, pages 181--184. IET, 1994.

\bibitem{varaiya2013max}
Pravin Varaiya.
\newblock The max-pressure controller for arbitrary networks of signalized
  intersections.
\newblock In {\em Advances in Dynamic Network Modeling in Complex
  Transportation Systems}, pages 27--66. Springer, 2013.

\bibitem{wang2013local}
Yibing Wang, Elias~B. Kosmatopoulos, Markos Papageorgiou, and Ioannis
  Papamichail.
\newblock Local ramp metering in the presence of a distant downstream
  bottleneck: Theoretical analysis and simulation study.
\newblock {\em IEEE Transactions on Intelligent Transportation Systems},
  15(5):2024--2039, 2014.

\bibitem{yuan2017kinematic}
Kai Yuan, Victor~L Knoop, and Serge~P Hoogendoorn.
\newblock A kinematic wave model in lagrangian coordinates incorporating
  capacity drop: Application to homogeneous road stretches and discontinuities.
\newblock {\em Physica A: Statistical Mechanics and its Applications},
  465:472--485, 2017.

\bibitem{zhang2004optimal}
Lei Zhang and David Levinson.
\newblock Optimal freeway ramp control without origin--destination information.
\newblock {\em Transportation Research Part B: Methodological},
  38(10):869--887, 2004.

\bibitem{zhang1997freeway}
Michael~H. Zhang and Stephen~G. Ritchie.
\newblock Freeway ramp metering using artificial neural networks.
\newblock {\em Transportation Research Part C: Emerging Technologies},
  5(5):273--286, 1997.

\bibitem{ziliaskopoulos2000linear}
Athanasios~K. Ziliaskopoulos.
\newblock A linear programming model for the single destination system optimum
  dynamic traffic assignment problem.
\newblock {\em Transportation science}, 34(1):37--49, 2000.

\end{thebibliography}

\FloatBarrier
\begin{appendix}

\section{Proof of Lemma \ref{prop:monotonic_in_inputs}} 
\label{proof:monotonic_in_flows}

\begin{proof} 
As in the proof of Lemma \ref{prop:monotonic_in_flows}, we make use of the fact that we can write the CCTM for $k \in \{1,\dots,n-1\}$ as
\begin{equation*}
\Phi_k(t+1) = \min \left\{ \Phi_k(t) + \Delta t~ d_k(\rho_k(t)), \Phi_k(t) + \Delta t~ F_k, \Phi_k(t) + \Delta t~ s_{k+1}(\rho_{k+1}(t)) \right\} .
\end{equation*}
The minimum function preserves monotonicity of its arguments. We can therefore verify monotonicity of the ACTM by checking that each of the functions:
\begin{align*}
f^{(-)}_k(\Phi_{k-1}(t), \Phi_k(t)) &:=  \Phi_k(t) + \Delta t~ d_k \left( \frac{1}{l_k} \left( \Phi_{k-1}(t) - \frac{1}{\bar\beta_k} \Phi_{k}(t) + R_k(t) \right) \right) , \\
f^{(+)}_k(\Phi_k(t), \Phi_{k+1}(t)) &:= \Phi_k(t) + \Delta t~ s_{k+1} \left( \frac{1}{l_{k+1}} \left( \Phi_{k}(t) - \frac{1}{\bar\beta_{k+1}} \Phi_{k+1}(t) + R_{k+1}(t) \right) \right)
\end{align*}
is nondecreasing in $R_k(t)$ and nonincreasing in $R_{k+1}(t)$.

\begin{itemize}
\item To verify that $\Phi_k(t+1)$ is nondecreasing in $R_k(t)$, first note that $f^{(+)}_k$ does not depend on $R_k(t)$ and is therefore trivially nondecreasing in this argument. 
Furthermore, $\rho_k(t)$ is nondecreasing in $R_{k}(t)$, since by assumption $l_k >0$. Also, $d_k(\cdot)$ is nondecreasing according to Assumption \ref{assumption:bounded_slope}. Recalling that $\Delta t > 0$, we conclude that $f^{(-)}_k$, which is a composition of the previously analyzed functions, is nondecreasing in $\R_{k}(t)$.
\item To verify that $\Phi_{k+1}(t)$ is nonincreasing in $R_{k+1}$, first note that $f^{(-)}_k$ does not depend on $R_{k+1}(t)$ and is therefore trivially nonincreasing in this argument. Furthermore, $\rho_{k+1}(t)$ is nondecreasing in $R_{k+1}(t)$, since by assumption $l_k >0$. Also, $s_{k+1}(\cdot)$ is nonincreasing according to Assumption \ref{assumption:bounded_slope}. Recalling that $\Delta t > 0$, we conclude that $f^{(+)}_k$, which is a composition of the previously analyzed functions, is nonincreasing in $R_{k+1}(t)$.
\end{itemize}

The proof of the cases $k=0$ and $k=n$ follows along the same lines.
\end{proof}

\section{Parameter choices}\label{sec:parameters} 

The sensor placement on the real freeway is described in \cite{de2015grenoble}. The topology of the freeway, in particular the lengths $l_k$ of cells of the CTM, can then easily be recovered from publicly available sources, e.g.\ Open Street Maps. It is important to note that although ramp metering will be installed in the future, it is not in place as of now (\today). We report the actual lengths of the onramps $\tilde l^{on}_k$. The ramps in cells $5$, $7$, $8$, $11$, $14$, $16$ and $19$ are used for ramp metering in this work. We chose to increase the onramp queue length used for ramp metering to $l^{on}_k = 400$m for these ramps, a value also used in \cite{gomes2006optimal} and \cite{muralidharan2012optimal}. This seems justified since installation of ramp metering should go along with the construction of sufficiently long queues to promise any benefits. Ramps that are not used for ramp metering are assigned $l^{on}_k = 0$m. The bounds on the onramp queue lengths are expressed using the dimensionless onramp capacity $\bar q_k$ in this work, defined as the maximal number of cars in the onramp queue. One has $\bar q_k = l^{on}_k / 8$m, which corresponds to a traffic jam density of $125$ (cars)/km (see below the definition of $\bar \rho_k$). The maximal onramp flow was chosen as $\bar r_k = 1800$ (cars)/h for every onramp, a standard choice corresponding to one car every two seconds.
We model the freeway dynamics with the CTM using a triangular fundamental diagram, which is fully characterized by the three parameters $v_k$, $\rho_k^c$ and $\bar \rho_k$. The free-flow velocity $v = 90$km/h is equal to the speed limit on the freeway\footnote{On the real freeway, infrastructure to allow for Variable Speed Limits (VSL) is already in place. So far, this capability is only used in case of high air pollution, when the speed limit is lowered to $70$km/h. In this work, we do not consider VSL. }. The traffic jam density is chosen as $250$ (cars)/km in every cell. This is a standard choice for a two-lane freeway, related to the average length of vehicles and typical distances between vehicles in standstill. The critical density was estimated from collected traffic data. Some minor parameter adaptations have been necessary to reproduce typical congestion patterns seen in reality: The maximal flow in cell 20 was reduced to $F_{20} = 4300$ (cars/h), such that this section becomes a bottleneck in rush-hour times, as observed in reality. Furthermore, traffic data from the end of the freeway (sections $18$, $19$, $20$ and $21$) are disturbed by what seems to be spill-back effects of the roads downstream of the considered freeway. Since we do not have data about downstream traffic conditions available, we resort to a reasonable estimate of the critical densities and use the value that was identified for cell $17$. The critical density for the very last cell is obtained as $\rho_{21}^c = \frac{3}{2} \cdot \rho_{17}^c$, since the freeway widens to three lanes at the end. As soon as $v_k$, $\rho_k^c$ and  $\bar \rho_k$ are chosen, additional parameters like $w_k$ and $F_k$ can be computed using the standard equations. We assume that the split ratios $\beta_k$ are constant in time. The values are again estimated from real traffic data. We use a discretization of $\Delta t = 15$s, in accordance with the data transmission interval of the sensors\footnote{The sensors count cars over intervals of $15$s and then submit the aggregated data at once.}. A summary of all data is given in Table \ref{tab:parameter_values}, the parameter values that have been modified, as described in the previous section, are highlighted by a star $^*$.
\begin{table}[tp] 
\caption{Parameter values of the Grenoble Freeway}
\begin{center}
\begin{tabular}{cccccccc}
\toprule 
Cell $k$ & $l_k$ (km) & $\tilde l^{on}_k$ (m) & $l^{on}_k$ (m) & $\bar q_k$ (cars) & $v_k$ (km/h) & $\rho_k^c$ (cars/km) & $\beta_k$ (\%)\\ [1ex] \hline
1 & 0.4 & 100 & 0 & 0 & 90 & 49.0 & 0\\ [0.7ex]
2 & 0.5 & 400 & 0 & 0 & 90 & 59.6 & 0\\ [0.7ex]
3 & 0.6 & - & - & - & 90 & 55.0 & 0\\ [0.7ex]
4 & 0.5 & - & - & - & 90 & 55.0 & 10 \\ [0.7ex]
5 & 0.5 & 120 & $400^*$ & 50 & 90 & 47.9 & 0 \\ [0.7ex]
6 & 0.7 & - & - & - & 90 & 52.0 & 18 \\ [0.7ex]
7 & 0.5 & 150 & 400$^*$ & 50 & 90 & 55.0 & 0 \\ [0.7ex]
8 & 0.5 & 120 & 400$^*$ & 50 & 90 & 51.1 & 0 \\ [0.7ex]
9 & 0.7 & - & - & - & 90 & 54.2 & 0 \\ [0.7ex]
10 & 1.3 & - & - & - & 90 & 48.0 & 11 \\ [0.7ex]
11 & 0.5 & 300 & 400$^*$ & 50 & 90 & 48.0 & 0 \\ [0.7ex]
12 & 0.5 & - & - & - & 90 & 51.6 & 0 \\ [0.7ex]
13 & 0.5 & - & - & - & 90 & 49.5 & 10 \\ [0.7ex]
14 & 0.5 & 200 & 400$^*$ & 50 & 90 & 54.7 & 0 \\ [0.7ex]
15 & 0.5 & - & - & - & 90 & 51.2 & 16 \\ [0.7ex]
16 & 0.5 & 200 & 400$^*$ & 50 & 90 & 51.2 & 0 \\ [0.7ex]
17 & 0.5 & - & - & - & 90 & 56.1 & 0 \\ [0.7ex]
18 & 0.5 & - & - & - & 90 & 56.1$^*$ & 10 \\ [0.7ex]
19 & 0.5 & 200 & 400$^*$ & 50 & 90 & 56.1$^*$ & 0 \\ [0.7ex]
20 & 0.5 & - & - & - & 90 & 56.1$^*$ & 8 \\ [0.7ex]
21 & 0.5 & 60 & 0 & 0 & 90 & 84.2$^*$ & 0 \\ [1.5ex]
\bottomrule
\end{tabular}
\end{center}
\label{tab:parameter_values}
\end{table}%

\end{appendix}

\end{document}